\newtheorem{thm}{Theorem}[section]
\newtheorem{theorem}[thm]{Theorem}
\newtheorem{proposition}[thm]{Proposition}
\newtheorem{lemma}[thm]{Lemma}
\newtheorem{corollary}[thm]{Corollary}
\theoremstyle{definition}
\newtheorem{definition}[thm]{Definition}
\newtheorem{example}[thm]{Example}
\theoremstyle{remark}
\newtheorem{remark}[thm]{Remark}
\newcommand{\LM}[1]{\hbox{\vrule width.2pt \vbox to#1pt{\vfill \hrule width#1pt height.2pt}}}
\newcommand{\LL}{{\mathchoice{\,\LM7\,}{\,\LM7\,}{\,\LM5\,}{\,\LM{3.35}\,}}}
\newcommand{\vertiii}[1]{{\left\vert\kern-0.25ex\left\vert\kern-0.25ex\left\vert #1 
    \right\vert\kern-0.25ex\right\vert\kern-0.25ex\right\vert}}
\begin{document}
\title{Model-free Data-Driven Inference}

\author{S.~Conti}
\author{F.~Hoffmann}
\address{Institut f\"ur Angewandte Mathematik, Universit\"at Bonn}
\email{sergio.conti@uni-bonn.de, franca.hoffmann@hcm.uni-bonn.de}

\author{M.~Ortiz}
\address{Division of Engineering and Applied Science, Caltech; Hausdorff Center for Mathematics, Universit\"at Bonn}
\email{ortiz@caltech.edu}

\begin{abstract}
We present a model-free data-driven inference method that enables inferences on system outcomes to be derived directly from empirical data without the need for intervening modeling of any type, be it modeling of a material law or modeling of a prior distribution of material states.
We specifically consider physical systems with states characterized by points in a phase space determined by the governing field equations. We assume that the system is characterized by two likelihood measures: one $\mu_D$ measuring the likelihood of observing a material state in phase space; and another $\mu_E$ measuring the likelihood of states satisfying the field equations, possibly under random actuation. 
We introduce a notion of intersection between measures which can be interpreted to quantify the likelihood of system outcomes. We provide conditions under which the intersection can be characterized as the athermal limit $\mu_\infty$ of entropic regularizations $\mu_\beta$, or thermalizations, of the product measure $\mu = \mu_D\times \mu_E$ as $\beta \to +\infty$. We also supply conditions under which $\mu_\infty$ can be obtained as the athermal limit of carefully thermalized $(\mu_{h,\beta_h})$ sequences of empirical data sets $(\mu_h)$ approximating weakly an unknown likelihood function $\mu$. In particular, we find that the cooling sequence $\beta_h \to +\infty$ must be slow enough, corresponding to quenching, in order for the proper limit $\mu_\infty$ to be delivered. Finally, we derive explicit analytic expressions for expectations $\mathbb{E}[f]$ of outcomes $f$ that are explicit in the data, thus demonstrating the feasibility of the model-free data-driven paradigm as regards making convergent inferences directly from the data without recourse to intermediate modeling steps.
\end{abstract}

\maketitle

%

\section{Introduction}

The boundary value problems of continuum mechanics and mathematical physics have a precise structure that set them apart from other classes of problems (cf., e.~g., \cite{Truesdell:1960}). Thus, the governing field equations set forth hard constraints in the form of partial differential equations and attendant boundary conditions that are universal, i.~e., material independent, and free of epistemic uncertainty. However, in order to define well-posed boundary value problems the field equations must be closed through the specification of a material law, which is material specific and determined empirically (cf., e.~g., \cite{Truesdell:1965}). The classical approach to formulating material laws, or {\sl material identification}, relies on modeling to represent the available material data in some appropriate mathematical form, be it equations of state, kinetic and hereditary laws (such as in viscoelasticity) and other representations (cf., e.~g., \cite{Meyers:1994,Bower:2010}). For stochastic systems, the process of modeling is often compounded by the need to additionally model priors, e.~g., in the context of Bayesian inference (cf., e.~g., \cite[Section 2]{Dashti:2017}).

There is no general theory that enables, starting from empirical data, the identification of material models of an arbitrary degree of accuracy that are sure to converge, in some appropriate sense, to the exact but unknown material law as the volume of empirical data increases. In practice, {\sl ad hoc} parameterized functions are often fitted to the data by means of regression or some other form of parametric estimation (cf., e.~g., \cite{Bock:2019} for a review of recent developments centered on machine learning, Bayesian learning, manifold learning, model reduction and other approaches). Models necessarily rely heavily on heuristics and intuition and inevitably introduce biases and uncontrolled modeling errors. They can also result in a massive loss of information relative to that which is contained in the empirical data sets themselves. These uncertainties render material modeling {\sl ad hoc}, open-ended, ill-posed and a major limiting factor as regards the ability to make accurate and reliable inferences of the outcomes of physical systems.

The epochal advances in experimental science of the past two decades, including time- and space-resolved full-field microscopy, have transformed mathematical physics from a data-poor to a data-rich field, which raises a number of fundamental questions in theory and in practice. In particular, the present abundance of material data begs the question whether a direct connection between material data and predicted outcomes can be effected that altogether bypasses the traditional step of modeling material behavior, be it via material laws or prior distributions. A notional comparison between classical and model-free data-driven inference is:
\begin{equation*}
\begin{array}{cccccc}
    \text{Classical inference:}
    & \text{Data} & \to & \text{Model} & \to & \text{Prediction}
    \\
    \text{Model-Free Data-Driven inference:}
    & \text{Data} &  & \longrightarrow &  & \text{Prediction}
\end{array}
\end{equation*}
Evidently, the model-free data-driven paradigm is lossless, i.~e., it incurs no loss of information with respect to the data set; unbiased, i.~e., it requires no assumptions regarding variables or prior distribution of the data; and trivially modelling-error free, as it bypasses the classical step of building a material model altogether.

The present work is concerned with the formulation of one such model-free data-driven inference paradigm and with establishing its well-posedness and properties of convergence with respect to the data. We specifically consider physical systems with states characterized by points $z$ in a phase space $Z$ determined by the governing field equations. In the deterministic setting, cf.~Fig.~\ref{3rlASw} and Section~\ref{sec:det}, the physical field equations then have the effect of restricting the possible states of the system to an affine subspace $E$ of $Z$, which we refer to as the {\sl constraint set}. For instance, in solid mechanics, the phase space $Z$ is the space of strain and stress $(\epsilon,\sigma)$ over the body and the field equations are compatibility of strains and equilibrium of stresses, which may depend on external forcing and boundary conditions. The constraint subspace $E$ of admissible states is, therefore, the set of stress and strain fields that are compatible and in equilibrium with the applied loading. In addition, material behavior restricts the possible states $(\epsilon,\sigma)$ of the system to a {\sl material set} $D$ in $Z$. Often, the material set is local, i.~e., defined pointwise. For instance, for a local elastic material the material set has the representation $D = \{(\epsilon(x),\sigma(x)) \in D_{\rm loc} \subset Z_{\rm loc},\text{ for a.~e. } x\in\Omega\}$, where $Z_{\rm loc}$ is the phase-space of a single material point, $D_{\rm loc}$  is the local material set, e.~g., the graph of a local material law, and $\Omega$ the reference configuration. The classical deterministic solution set is, therefore, $D \cap E$, which is non-empty provided $D$ and $E$ satisfy appropriate closedness and transversality conditions \cite{conti2018data}.

\begin{figure}[ht]
\begin{center}
	\begin{subfigure}{0.35\textwidth}\caption{} \includegraphics[width=0.99\linewidth]{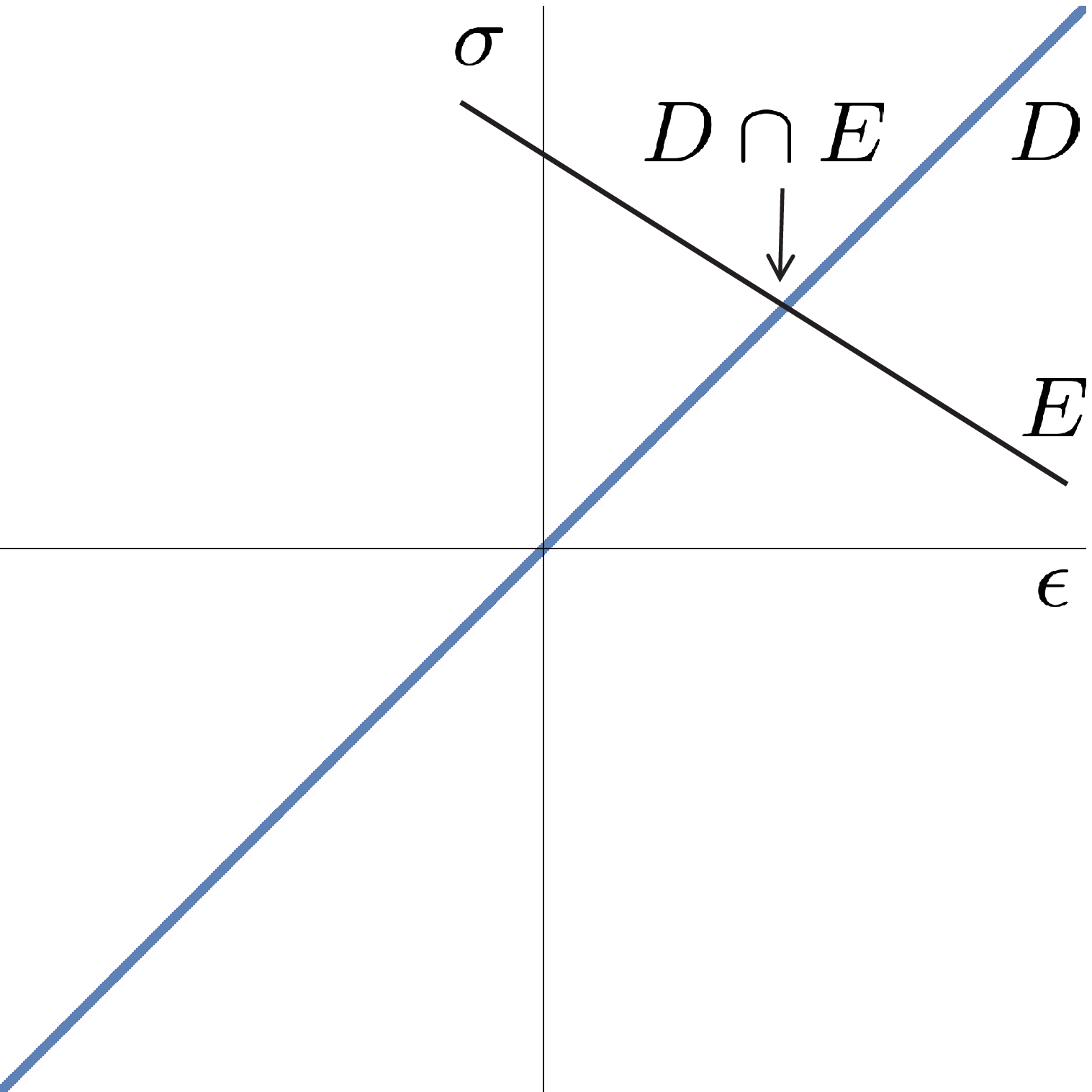}
	\end{subfigure}
    $\quad$
	\begin{subfigure}{0.35\textwidth}\caption{} \includegraphics[width=0.99\linewidth]{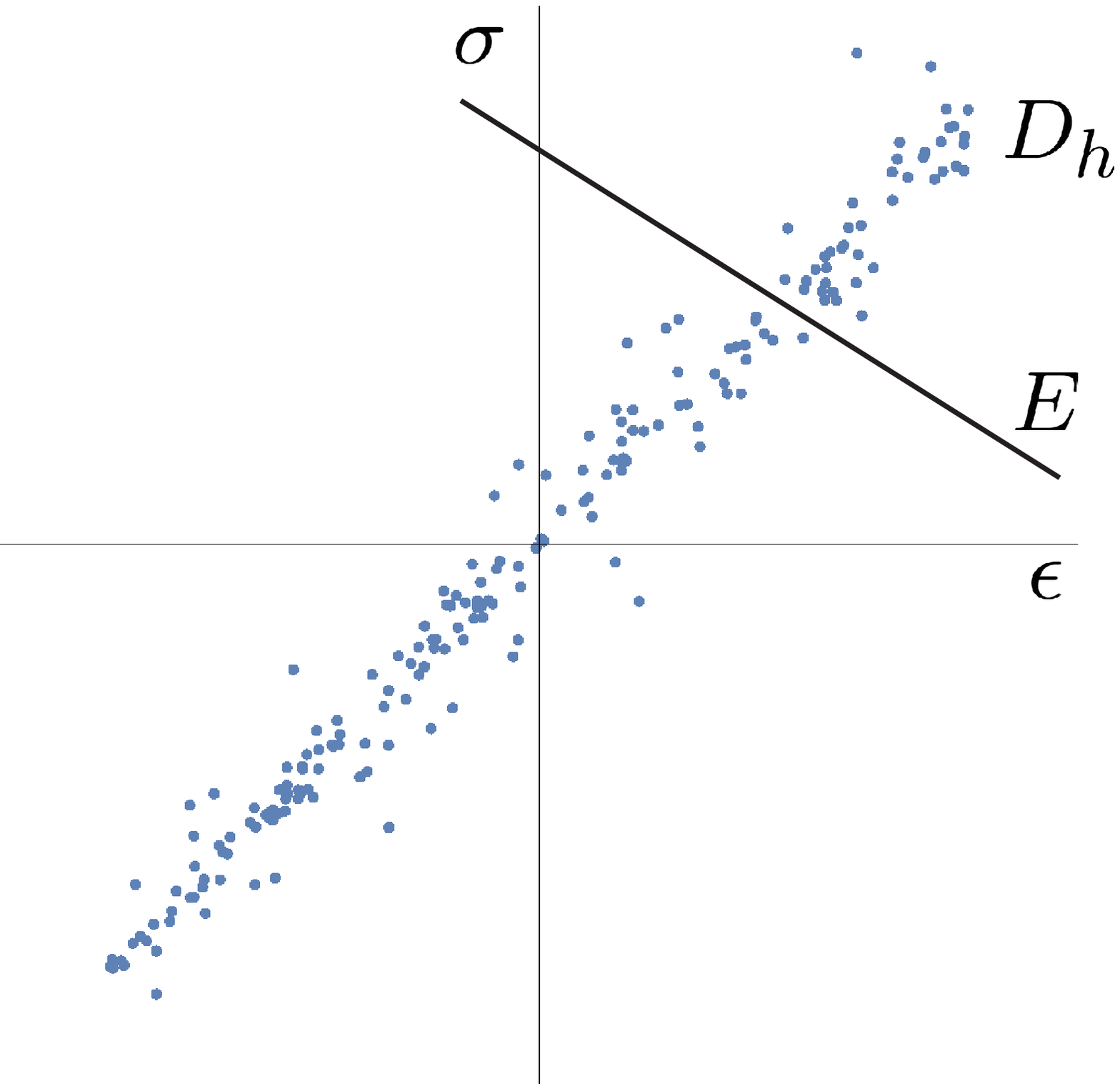}
	\end{subfigure}
    \caption{Deterministic case. a) Material data set for linear elasticity taking the form of an unbounded graph $D$, constraint set $E$ and classical solution $D\cap E$. b) Empirical material data set $D_h$ showing randomness due to measurement error and experimental scatter. Note this may result in an empty intersection between $D_h$ and $E$.} \label{3rlASw}
\end{center}
\end{figure}

Often, however, the material set $D$ is only known approximately through a sequence $D_h$ of approximating data sets, e.~g., consisting of empirical measurements, cf.~Fig.~\ref{3rlASw}b. In that case, the intersection $D_h \cap E$ is likely to be empty and $D_h$ fails to generate an approximating sequence of solutions in the classical sense. To circumvent this difficulty, \cite{kirchdoerfer2016data,conti2018data} proposed a Data-Driven (DD) regularization in which approximate solutions are identified with pairs of states $y_h \in D_h$, $z_h \in E$ such that some appropriate distance $d(y_h,z_h)$ is minimized in $Z \times Z$. Choosing again the example of solid mechanics for purposes of illustration, the data-driven solutions thus defined consist of a pair $z_h=(\epsilon_h,\sigma_h)$ and $y_h=(\epsilon_h', \sigma_h')$ of stress and strain fields, where the state $z_h$ is required to be in the admissible set $E$, i.~e., to consist of a compatible strain field and a stress field in equilibrium, whereas the state $y_h$ is required to be in the material set $D_h$. In a deterministic framework, solving the data-driven problem then entails minimizing an appropriate distance between the points $z_h=(\epsilon_h,\sigma_h)$ and $y_h=(\epsilon_h', \sigma_h')$. Appropriate notions of convergence of the data set $D_h \to D$  ensuring convergence of solutions, as well as related notions of relaxation in the infinite-dimensional setting, have been set forth in \cite{conti2018data,Conti:2020,Roger:2020}. We note that the paradigm is strictly data-driven and model-free in the sense that solutions are obtained, or approximated, directly from the data set without recourse to any intervening modeling of the data. Extensions of the approach, applications and follow-up work have spawned a sizeable engineering literature to date (cf., e.~g., \cite{Nguyen:2018,Ayensa:2018,Leygue:2018,Kanno:2018b,Zhou:2020,Gebhardt:2020a,Gebhardt:2020b} for a representative sample).

\begin{figure}[ht]
\begin{center}
	\begin{subfigure}{0.35\textwidth}\caption{} \includegraphics[width=0.99\linewidth]{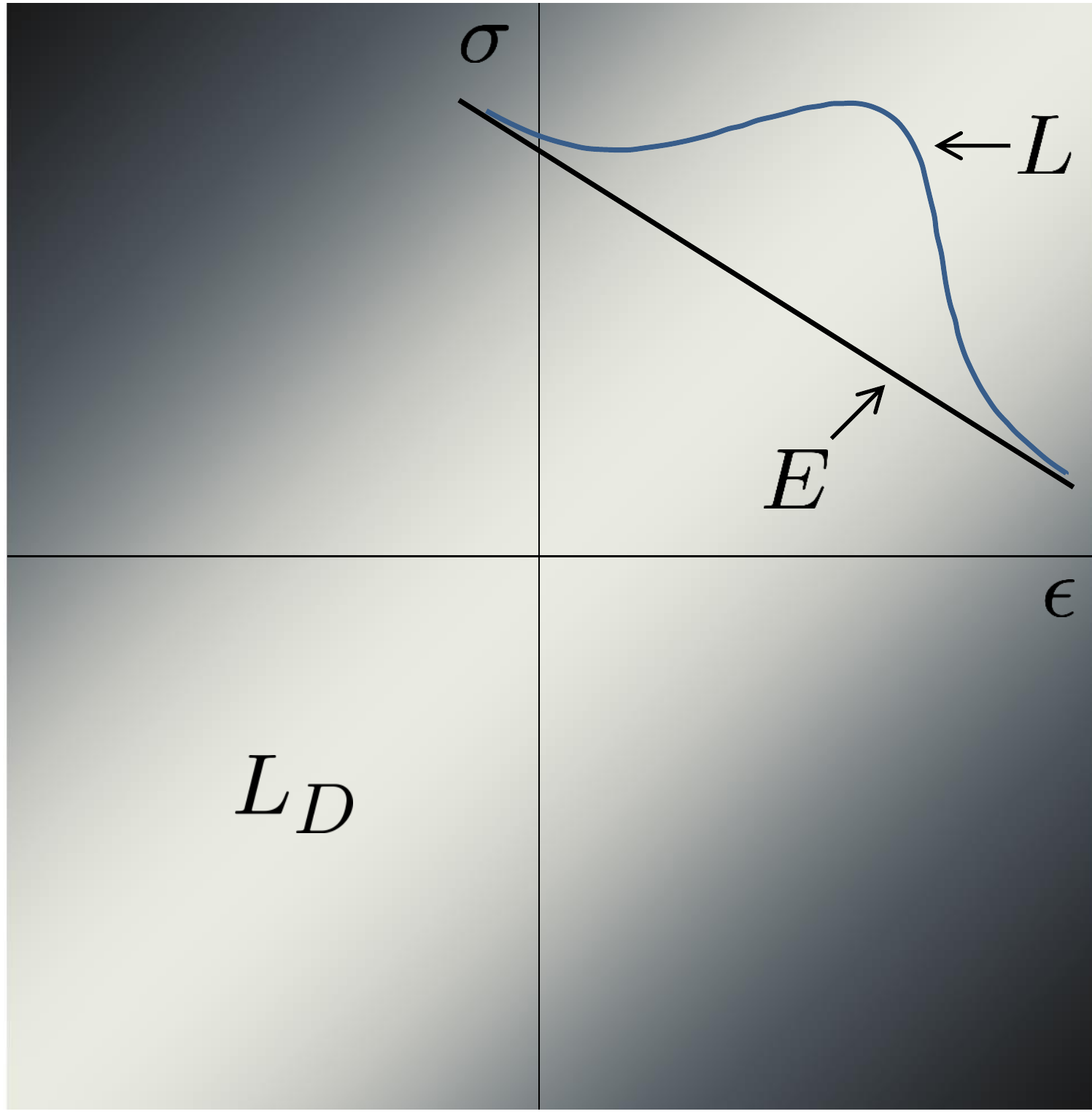}
	\end{subfigure}
    $\quad$
	\begin{subfigure}{0.35\textwidth}\caption{} \includegraphics[width=0.99\linewidth]{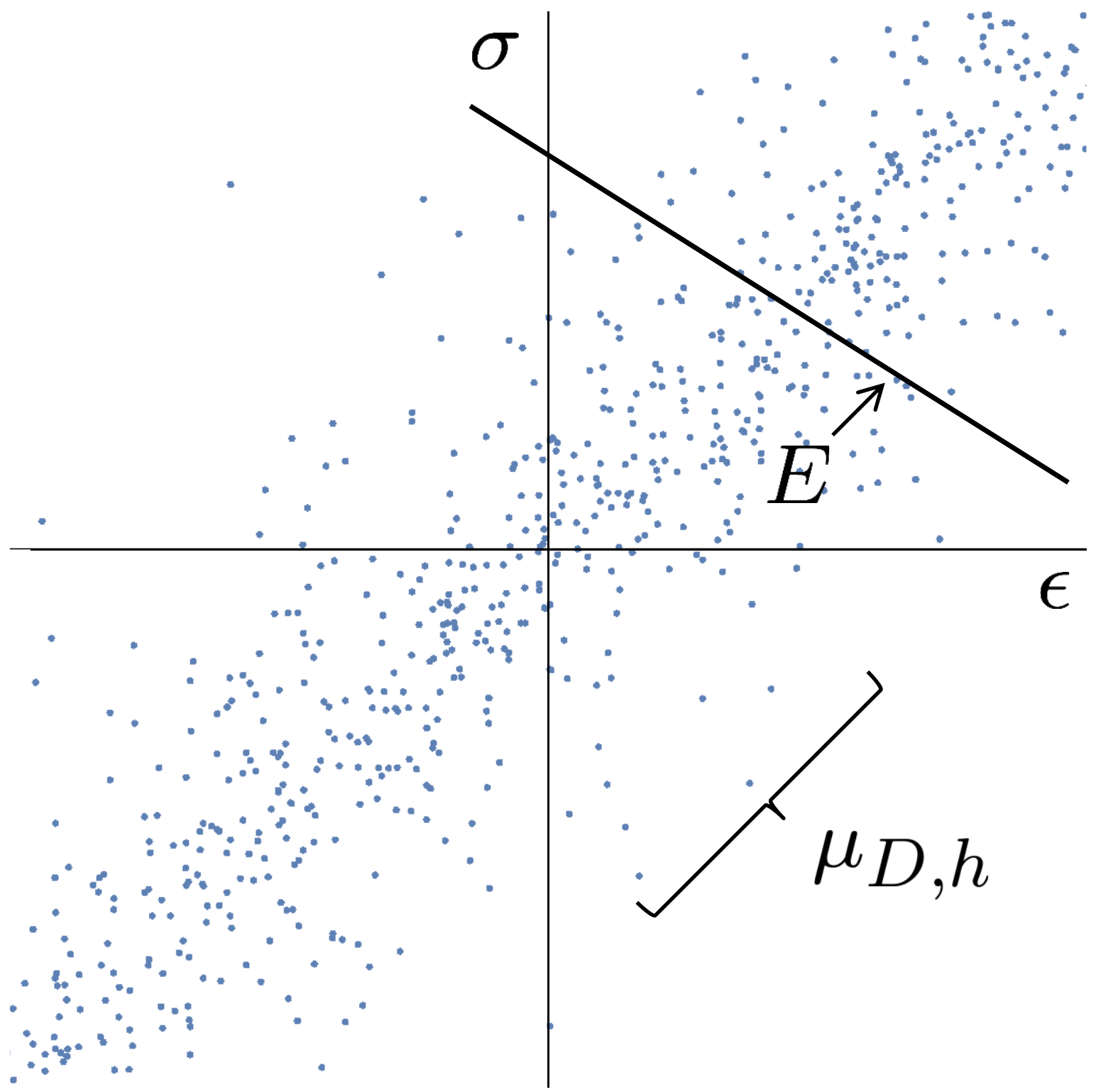}
	\end{subfigure}
    \caption{Stochastic case. a) Material likelihood function $L_D$ in the form of a sliding Gaussian (dark: low likelihood; light: high likelihood), constraint set $E$ and likelihood function $L$ obtained by restricting $L_D$ to $E$. b) Empirical likelihood measure $\mu_{D,h}$ sampled from $L_D$. Note empty intersection of the support of $\mu_{D,h}$ with $E$.} \label{lRkk0p}
\end{center}
\end{figure}

\subsection{Problem Set-Up}\label{sec:setup}
In the present work, we extend this deterministic framework to stochastic systems. To this end, we assume that material behavior is characterized by a Radon measure $\mu_D$, or {\sl material measure}, defined over the phase space $Z$, with the property that $\mu_D(A)$ measures the likelihood of observing in the laboratory a material state in the set $A \subset Z$. In this manner, we allow the behavior of the material to be intrinsically stochastic, i.~e., the spread of the measure $\mu_D$ is not necessarily the result of measurement error, or experimental scatter, but the result of randomness of the material behavior itself. A fundamental difficulty inherent to such a representation is that the material measure $\mu_D$ is not finite in general and, in particular, it cannot be normalized to define a probability measure. For instance, consider a deterministic elastic material characterized by a local material law $\sigma = \hat{\sigma}(\epsilon)$, with $(\epsilon,\sigma) \in \mathbb{R}^{d\times d}_{\rm sym} \times \mathbb{R}^{d\times d}_{\rm sym} \equiv Z_{\rm loc}$, and where $\hat{\sigma}(\cdot)$ is a locally Lipschitz material law describing the behavior of one material point. Then the corresponding material measure is $\mu_{D,{\rm loc}} = \mathcal{H}^{d(d+1)/2} \LL D_{\rm loc}$, where $D_{\rm loc} = \{(\epsilon,\sigma) \, : \, \sigma = \hat{\sigma}(\epsilon)\}$ is the graph of the local material law. Evidently, $\mu_{D,{\rm loc}}$ is a Radon measure over $Z_{\rm loc}$ but it is not finite.

For the sake of generality and without significant additional complexity, we also allow the loading to be random, and we describe the field constraints by means of a second Radon measure $\mu_E$ over $Z$, or {\sl constraint measure}, with the property that $\mu_E(A)$ measures the likelihood of finding an admissible state in $A \subset Z$. For instance, for an elastic material, $\mu_E(A)$ returns the likelihood of finding a pair $z = (\epsilon, \sigma)$ in $A$ with $\epsilon$ compatible and $\sigma$ in equilibrium with the random loading. As already noted, in the particular case of deterministic loading the field equations restrict admissible states to an affine subspace $E$ of $Z$. For $Z = \mathbb{R}^N \times \mathbb{R}^N$ finite dimensional, we show in Section~\ref{RueZ3N} that in many examples the structure of the field equations implies that the admissible space $E$ is indeed a subspace of $Z$ of dimension $N$ and co-dimension $N$. In this case, the corresponding constraint measure is, therefore, $\mu_E = \mathcal{H}^N \LL E$. Again, we note that $\mu_E$ is a Radon measure over $Z$ but it is not finite.

\subsection{Our Contributions}\label{sec:contributions}

For a system thus defined, the classical inference problem consists of determining the likelihood of observing a material state $y \in Z$ and an admissible state $z \in Z$ conditioned by the requirement that $y=z$. 
For suitable choices of those measures, we introduce a likelihood measure which can be interpreted as the intersection $\mu_D\cap\mu_E$ of $\mu_D$ and $\mu_E$.
By way of example, if $Z = \mathbb{R}^N \times \mathbb{R}^N$, $\mu_D = L_D \mathcal{L}^{2N}$ and $\mu_E = L_E \mathcal{L}^{2N}$, for some continuous material and constraint likelihood functions $L_D$ and $L_E$, respectively, then $\mu_D \cap \mu_E = L_D L_E \mathcal{L}^{2N}$. If $Z = \mathbb{R}^N \times \mathbb{R}^N$, $\mu_D = L_D \mathcal{L}^{2N}$ with continuous material likelihood function $L_D$ and $\mu_E = \mathcal{H}^N \LL E$, corresponding to deterministic loading, then $\mu_D \cap \mu_E = L_D \mathcal{H}^N \LL E$, Fig.~\ref{lRkk0p}a. We note that, whereas neither $\mu_D$ nor $\mu_E$ are finite in general, we expect the intersection $\mu_D \cap \mu_E$ to be finite and non-degenerate in the cases of interest, i.~e., $0 < |\mu_D \cap \mu_E| < +\infty$. In particular, $\mu_D \cap \mu_E$ can then be normalized by $|\mu_D \cap \mu_E|$ to define a probability measure characterizing the expectation of outcomes of the system. The condition that the intersection $\mu_D \cap \mu_E$ be well-defined, finite and non-degenerate sets forth a general notion of {\sl transversality} between the measures $\mu_D$ and $\mu_E$.

For definiteness, we restrict attention to finite-dimensional systems, $Z = \mathbb{R}^N \times \mathbb{R}^N$
and introduce in Definition~\ref{def:diag} 
and Remark~\ref{def:diag2} new concepts of {\sl diagonal concentration}, {\sl intersection of measures} and {\sl transversality}.
Specifically, we consider the product measure $\mu = \mu_D \times \mu_E$ over $Z \times Z$ and penalize deviations from the diagonal ${\rm diag}(Z\times Z)$ by means of parameterized Gaussian weights $w_\beta$ converging weakly to $\mathcal{H}^N \LL {\rm diag}(Z\times Z)$. We further assume
transversality, in the sense that the weighted, or {\sl thermalized}, measures $\mu_\beta = w_\beta \mu$ converge weakly to a measure $\mu_\infty$ as $\beta \to +\infty$ (this is different from the usage of the term transversality in \cite{conti2018data}). It then follows, Lemma~\ref{lemmadiagconc}, that the measure $\mu_\infty$, referred to as the {\sl diagonal concentration} of $\mu$, is diagonal, i.~e., it is supported on ${\rm diag}(Z\times Z)$. For measures $\mu = \mu_D \times \mu_E$ for which this procedure is well-defined, the diagonal concentration measure  $\mu_\infty$ supplies a convenient representation of the intersection measure $\mu_D \cap \mu_E$ and, by extension, of the solution of the classical inference problem. In Section~\ref{tw7QP8}, we show using the Kullback-Leibler divergence that thermalization may be regarded as an entropic regularization and the diagonal concentration as the corresponding athermal limit, which lends motivation to the choice of terminology.

In Sections~\ref{LlWZ3G} and \ref{nXr8mw}, we present Theorems~\ref{thm:concentration-Lebesgue} and \ref{thm:det-Leb} that illustrate two cases in which the thermalization approach to diagonal concentration is well-defined. Both theorems are concerned with finite-dimensional systems with phase space $Z = \mathbb{R}^N \times \mathbb{R}^N$. The first case is concerned with joint likelihood measures $\mu = L \, \mathcal{L}^{2N} \times \mathcal{L}^{2N}$ that are absolutely continuous with respect to the Lebesgue measure over $Z\times Z$ with regular density $L$. This scenario allows for general correlations between the likelihoods of material and admissible states. In this case, Theorem~\ref{thm:concentration-Lebesgue} sets forth regularity, continuity and equi-integrability conditions for $L$ ensuring the existence of the diagonal concentration $\mu_\infty$. The second scenario is concerned with the deterministic loading case, $\mu_E = \mathcal{H}^N \LL E$, and material measures $\mu_D = L_D \, \mathcal{L}^{2N}$ that are absolutely continuous with respect to the Lebesgue measure over $Z$ with regular density $L_D$. Here again, Theorem~\ref{thm:det-Leb} supplies regularity, continuity and equi-integrability conditions on $L_D$ and $E$ ensuring the transversality of $\mu_D$ and $\mu_E$ and the existence of the diagonal concentration $\mu_\infty$, in the sense of Definition~\ref{def:diag}. Examples are also presented that illustrate the scope of the theorems.

Beyond characterizing the intersection of transverse measures in a convenient manner, thermalization proves crucial in cases in which the material measure $\mu_D$ is known only approximately through a sequence of approximating discrete measures $(\mu_{D,h})$, e.~g., corresponding to experimental measurements, converging weakly to $\mu_D$, Fig.~\ref{lRkk0p}b. In this case, the intersection measures $\mu_{D,h} \cap \mu_E$, even assuming that it can be defined, may be degenerate, e.~g., if the loading is deterministic with no intersection between the support of $\mu_{D,h}$ and the constraint set $E$. Under these conditions, the sequence $(\mu_{D,h} \cap \mu_E)$ fails to approximate the limiting intersection $\mu_D \cap \mu_E$ in any meaningful way. More generally, if the system is subject to random loading and $\mu$ is approximated by a sequence $(\mu_h)$ of discrete measures converging weakly to $\mu$, the diagonal concentrations $(\mu_{h,\infty})$ are almost surely degenerate and fail to characterize $\mu_\infty$ in the limit.

To circumvent this difficulty, we again resort to thermalization and define an auxiliary sequence of thermalized empirical measures $(\mu_{h,\beta_h})$ with the aid of a carefully chosen sequence $\beta_h \to +\infty$. Here, the effect of thermalization is to regularize the discrete measures $\mu_h$ so that the athermal limit $\mu_{h,\beta_h}\rightarrow \mu_\infty$ is well-defined for some limiting measure $\mu_\infty$. In the case of deterministic loading, we rely on a suitable thermalization $\mu_{D,h,\beta_h}$ of the sequence $(\mu_{D,h})$ of empirical material sets to intersect properly with $\mu_E$ and deliver a well-defined limiting measure $\mu_\infty$. Evidently, the central question then concerns whether the limiting measure $\mu_\infty$ delivered by $(\mu_{h,\beta_h})$ is indeed the diagonal concentration of $\mu$. Intuitively, we anticipate a need for the cooling sequence $\beta_h$ to be slow enough, i.~e., to define a {\sl quenching} sequence, lest the regularizing effect of thermalization be lost along the sequence.

In Sections~\ref{iw71LU} and \ref{lH1P1g}, we present Theorems~\ref{thm:randload} and \ref{tQ4UnZ} that establish conditions under which the quenching procedure just described is well-defined and properly convergent. The theorems are again restricted to finite-dimensional systems with phase space $Z = \mathbb{R}^N \times \mathbb{R}^N$ and concerned with the same scenarios considered in Section~\ref{DOsT8G}, namely, Lebesgue absolutely-continuous likelihood measures $\mu = L \, \mathcal{L}^{2N} \times \mathcal{L}^{2N}$ and Lebesgue absolutely-continuous materials measures $\mu_D = L_D \, \mathcal{L}^{2N}$ in systems under deterministic loading, $\mu_E = \mathcal{H}^N \LL E$. The theorems set forth sufficient conditions on the convergence $\mu_h \rightharpoonup \mu$, conversely $\mu_{D,h} \rightharpoonup \mu_D$, and the quenching sequence $\beta_h$ ensuring the convergence of the thermalized sequence $(\mu_{h,\beta_h})$ to the diagonal concentration $\mu_\infty$ of $\mu$. Again, we illustrate the requirements and scope of the theorems by means of examples.

Chief among these, from the standpoint of applications, is the case of approximation by means of discrete measures $(\mu_h)$, or $(\mu_{D,h})$ as the case may be, presented in Sections~\ref{yu04L1} and \ref{8yK9TU}. We show that, in these cases, the thermalization and quenching procedure results in analytical expressions for the expectation $\mathbb{E}[f]$ of bounded continuous functions $f$ that are explicit in the data and require no intervening modeling for their computation. These results demonstrate the feasibility of the model-free data-driven inference paradigm and, in particular, the possibility of making convergent inferences on system outcomes directly from data. These observations notwithstanding, it should be carefully noted that the sums involved in the explicit analytical expressions for the expectations are of combinatorial complexity in the dimension of phase space and the size of the material set. Sums of this type can be effectively implemented and computed by means of Monte Carlo methods, for which there is an extensive literature in computational physics. However, these aspects of implementation require careful attention and are beyond the scope of the present paper.

\section{Deterministic Data-Driven mechanics in finite dimensions}
\label{sec:det}

For completeness and by way of introduction, we begin with a brief summary of deterministic Data-Driven (DD) mechanics of finite-dimensional systems as introduced in \cite{kirchdoerfer2016data,kirchdoerfer2017data}.

\subsection{Phase space, compatibility and conservation constraints}\label{RueZ3N}

The field theories of science have a particular structure that pervades disparate fields of application 
and which set them apart from other data-intensive fields. Here, we restrict attention to finite-dimensional systems comprising $m$ components whose state is characterized by two work-conjugate fields $\epsilon \equiv \{\epsilon_e \in \mathbb{R}^d,\ e=1,\dots,m\}$ and $\sigma \equiv \{\sigma_e \in \mathbb{R}^d,\ e=1,\dots,m\}$. We refer to the space of pairs $Z_e = \{z_e \equiv (\epsilon_e, \sigma_e) \in \mathbb{R}^d \times \mathbb{R}^d\}$ as the {\sl local phase space} of the component $e$, and $Z = Z_1 \times \cdots \times Z_m = \mathbb{R}^{2N}$, $N = m d$, as the {\sl global phase space} of the system.
In the entire paper we assume that $Z$ is endowed with a scalar product and denote by $\|\cdot\|$ the corresponding norm; for $(y,z)\in Z\times Z$ we use $\vertiii{(y,z)}:=\sqrt{\|y\|^2+\|z\|^2}$. We denote by $|x|_n$ the Euclidean norm 
of $x\in\mathbb R^n$ and by $x\cdot y$ the corresponding scalar product.

\begin{example}[Trusses]\label{nBpq7d}{\rm
We illustrate the essential structure of discrete field theories by means of the simple example of truss structures. Trusses are assemblies of bars that deform in uniaxial tension or compression. The bars are articulated at common joints, or nodes, that act as hinges, i.~e., cannot transmit moments. Trusses are examples of connected networks that obey conservation laws. Other examples in the same class include electrical circuits, pipeline networks, traffic networks, and others.

The material behavior of a bar $e$ is characterized by a particularly simple relation between uniaxial strain $\epsilon_e$ and uniaxial stress $\sigma_e$. Thus, in this case $d=1$ and  the local phase spaces are $Z_e = \mathbb{R} \times \mathbb{R}$. These local states are subject to the following laws:

i) Compatibility: Suppose that bar $e$ is connected to nodes $a$ and $b$. Then, the strain in the bar is
\begin{equation}
    \epsilon_e = \frac{u_b - u_a}{L_e} \cdot d_e ,
\end{equation}
where $L_e$ is the length of the bar $e$, $d_e \in \mathbb{R}^3$ is the unit vector pointing from $a$ to $b$ and $u_a$, $u_b \in \mathbb{R}^3$ are the displacements of the $a$ and $b$, respectively.

ii) Equilibrium: Let $S_a$ be the star of an unconstrained node $a$, i.~e., the collection of bars connected to $a$. Then, we must have
\begin{equation}
    \sum_{e\in S_a} \sigma_e d_e A_e + f_a = 0,
\end{equation}
where $A_e$ is the cross-sectional area of bar $e$, $d_e$ points from $a$ to the node connected to $a$ by bar $e$, and $f_a\in\mathbb{R}^3$ is the force applied to node $a$.} \hfill$\square$
\end{example}

As the above example indicates, in many cases the state of the system is subject to linear constraints of the general form
\begin{subequations}\label{9qHWzU}
\begin{align}
    &
    \sum_{e=1}^m w_e B_e^T \sigma_e = f ,
    \\ &
    \epsilon_e = B_e u + g_e , \quad e = 1,\dots m ,
\end{align}
\end{subequations}
where $u \in \mathbb{R}^{n}$ is the array of degrees of freedom of the system, $w_e$ are positive weights, $B_e \in \mathbb{R}^{d \times n}$ is a discrete gradient operator, $B_e^T$ is a discrete divergence operator, $f \in \mathbb{R}^n$ is a force array resulting from distributed sources and Neumann boundary conditions and the arrays $g_e \in \mathbb{R}^{d}$ follow from Dirichlet boundary conditions. The constraints (\ref{9qHWzU}) are material independent and define an affine subspace $E$ of $Z$, the {\sl constraint set}. The constraint set $E$ encodes all the data of the problem, including geometry, loading and boundary conditions. The constraints (\ref{9qHWzU}) can also be expressed in matrix form as
\begin{subequations}\label{vZftZT}
\begin{align}
    &
    B^T \tau = f
    \\ &
    \epsilon = B u + g ,
\end{align}
\end{subequations}
with $B = (B_1, \dots, B_m) \in \mathbb{R}^{N \times n}$, $\epsilon = (\epsilon_1, \dots, \epsilon_m)  \in \mathbb{R}^N$, $\tau = (w_1 \sigma_1,\dots,$ $w_m \sigma_m)$  $\in$ $\mathbb{R}^N$ and $g = (g_1, \dots, g_m)  \in \mathbb{R}^N$.

We note that the affine space $E$ defined by the constraints (\ref{vZftZT}) is a translate of the linear space $E_0$ defined by the homogeneous constraints
\begin{subequations}
\begin{align}
    &   \label{qmS3Q5}
    B^T \tau = 0
    \\ &  \label{jJ9B6T}
    \epsilon = B u .
\end{align}
\end{subequations}
Evidently, $E_0 = E_\epsilon \times E_\sigma$, where $E_\epsilon$ is the linear space defined by (\ref{jJ9B6T}) and $E_\sigma$ is the linear space defined by (\ref{qmS3Q5}). Therefore, we have
\begin{equation}
    {\rm dim}(E_0)
    =
    {\rm dim}(E_\epsilon)
    +
    {\rm dim}(E_\sigma)
    =
    {\rm dim}({\rm Im}(B)) + {\rm dim}({\rm Ker}(B^T))
    =
    N .
\end{equation}
Since $Z = \mathbb{R}^N \times \mathbb{R}^N$, it follows that the constraint set $E$ is an affine subspace of $Z$ of dimension $N$ and co-dimension $N$.

This observation  characterizes the structure and dimensionality of the phase space $Z$ and of the subspace $E$ of all admissible states in $Z$, or constraint set, i.~e., the set of all the states that satisfy the conservation laws (\ref{vZftZT}). 

\subsection{Material characterization}

We assume that the behavior of the material of each component $e=1,\dots,m$ of the system is characterized by---possibly different---local material data sets $D_e$ of pairs $z_e \equiv (\epsilon_e, \sigma_e)$, or {\sl local states}. For instance, each point in the data set may correspond to, e.~g., an experimental measurement, a subgrid multiscale calculation, or some other means of characterizing material behavior. The local material data sets can be point sets, graphs or sets of any arbitrary dimension. The set $D = D_1 \times \cdots \times D_m$ is the global material data set.

\begin{example}[Material laws]\label{NAcwZE}{\rm
Classically, material behavior is often characterized by a convex function $W_e : \mathbb{R}^d \to \mathbb{R}$, with dual $W_e^*$ such that the $d$-dimensional graph
\begin{equation}\label{NVajuA}
    D_e = \{ y_e = (\epsilon_e, \sigma_e) \in \mathbb{R}^d \times \mathbb{R}^d \, : \,
    \sigma_e = DW_e(\epsilon_e) \}
\end{equation}
is the local material set. For instance, for linear material behavior, Fig.~\ref{3rlASw}a, we have
\begin{equation}\label{b89vda}
    W_e(\epsilon_e)
    =
    \frac{1}{2}
    \mathbb{C}_e \epsilon_e \cdot \epsilon_e ,
    \qquad
    W_e^*(\sigma_e)
    =
    \frac{1}{2}
    \mathbb{C}_e^{-1} \sigma_e \cdot \sigma_e ,
\end{equation}
where $\mathbb{C}_e>0$ is a fixed scalar.
}
\hfill$\square$
\end{example}

\begin{example}[Point data sets]\label{AEZUw9}{\rm
Another common situation concerns materials that are characterized experimentally and whose behavior is known only through a point data set $D_e$, Fig.~\ref{3rlASw}b, collecting the results of experimental tests. Evidently, such material data sets do not define graphs, much less affine subspaces, in the local phase space $Z_e$.} \hfill$\square$
\end{example}

\subsection{Classical solutions}\label{JzZtK7}

Consider now a system with phase space $Z = \mathbb{R}^N \times \mathbb{R}^N$ characterized by a material data set $D$ and a constraint set in the form of an affine space $E$ of dimension $N$ and codimension $N$. Evidently, the set of {\sl classical solutions} of the system is the intersection $D \cap E$, i.~e., material states of the system that are admissible in the sense of satisfying all equilibrium and all compatibility constraints.

\subsection{Data-Driven (DD) reformulation}

As noted in \cite{kirchdoerfer2016data}, the notion of classical solution is too rigid in cases where solutions may be reasonably expected to exist but for which $D \cap E = \emptyset$ due to the paucity of the data, e.~g., taking the form of a point set, see Example~\ref{AEZUw9}. The analysis of such systems requires a suitable extension of the notion of solution.

Data-Driven (DD) solvers seek to determine the material state $y \in D$ that is closest to being admissible, in the sense of $E$, or, alternatively, the admissible state $z \in E$ that is closest to being a possible state of the material, in the sense of $D$. Optimality is understood in the sense of a suitable norm, e.~g., for the set-up in Example~\ref{NAcwZE}, we may choose
\begin{equation}\label{Poyet2}
    \| z \|
    =
    \left(
    \sum_{e=1}^m
        w_e
        \Big(
            \mathbb{C}_e |\epsilon_e|_d^2
            +
            \mathbb{C}_e^{-1} |\sigma_e|_d^2
        \Big)
    \right)^{1/2} ,
\end{equation}
with $w_e > 0$ as in \eqref{9qHWzU}, $\mathbb{C}_e > 0$ as in \eqref{b89vda}, $e=1,\dots,m$.  The corresponding DD problem is, then,
\begin{equation}\label{sTO0ic}
    \inf_{y \in D} \inf_{z \in E} \| y - z \|^2
    =
    \inf_{z \in E} \inf_{y \in D} \| y - z \|^2 ,
\end{equation}
i.~e., we wish to determine the state $z \in E$ of the system that is admissible and closest to the data set $D$, or, equivalently, the point $y \in D$  in the material data set that is closest to being admissible.

Evidently, if $E$ is affine and $D$ is compact, e.~g., consisting of a finite collection of points, then the DD problem (\ref{sTO0ic}) has solutions by the Weierstrass extreme-value theorem. More generally, in \cite[Cor.~2.9]{conti2018data} the following is proved.

\begin{proposition}[Existence of DD solutions]\label{1Lublu}
Let $Z = \mathbb{R}^N \times \mathbb{R}^N$, $E$ an affine subspace of $Z$ and
$D$ a non-empty closed subset of $Z$. Suppose that the condition
\begin{equation}\label{FRaf5E}
    \| y - z \| \geq c ( \| y \| + \| z \| ) - b
\end{equation}
holds for all $y \in D$, $z \in E$, with some constants $c > 0$ and $b \geq 0$. Then, the DD problem (\ref{sTO0ic}) has at least one solution.
\end{proposition}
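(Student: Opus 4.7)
The plan is to apply the direct method of the calculus of variations, which in finite dimensions reduces to extracting a convergent minimizing sequence via Bolzano--Weierstrass and using closedness to pass the limit into the constraint sets.

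First, I would argue the infimum is finite and nonnegative. Since $D$ is nonempty and $E$ is a nonempty affine subspace of $\mathbb{R}^N\times\mathbb{R}^N$, picking any $y_0\in D$ and $z_0\in E$ gives $0\le \inf_{y\in D}\inf_{z\in E}\|y-z\|^2\le \|y_0-z_0\|^2<+\infty$. Let $I$ denote this infimum and select a minimizing sequence $(y_n,z_n)\in D\times E$ with $\|y_n-z_n\|^2\to I$.

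Second, I would use the coercivity hypothesis \eqref{FRaf5E} to produce boundedness. Since $\|y_n-z_n\|^2$ converges, the sequence $\|y_n-z_n\|$ is bounded, and then \eqref{FRaf5E} rearranges to
\begin{equation*}
\|y_n\|+\|z_n\|\le \frac{1}{c}\bigl(\|y_n-z_n\|+b\bigr),
\end{equation*}
so both $(y_n)$ and $(z_n)$ are bounded in $\mathbb{R}^N\times\mathbb{R}^N$. By Bolzano--Weierstrass, there is a subsequence (not relabeled) with $y_n\to y^\ast$ and $z_n\to z^\ast$ in $\mathbb{R}^N\times\mathbb{R}^N$.

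Third, I would use closedness to conclude $y^\ast\in D$ and $z^\ast\in E$: $D$ is closed by hypothesis, and any affine subspace of a finite-dimensional space is closed. Finally, continuity of the norm yields $\|y^\ast-z^\ast\|^2=\lim_n\|y_n-z_n\|^2=I$, so $(y^\ast,z^\ast)$ attains the infimum in \eqref{sTO0ic}.

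The only nontrivial step is the boundedness argument; in particular, assumption \eqref{FRaf5E} is exactly what rules out the obstruction that a minimizing sequence could escape to infinity along a direction in which $D$ asymptotically becomes parallel to $E$ (so that $\|y_n-z_n\|$ stays bounded while $\|y_n\|,\|z_n\|\to\infty$). Without \eqref{FRaf5E}, compactness of the minimizing sequence would fail, so this is where the hypothesis does all the work; the remainder of the argument is standard closedness/continuity bookkeeping in finite dimensions.
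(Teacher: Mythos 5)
Your proof is correct and follows essentially the same route as the paper's: extract a minimizing sequence, use the coercivity hypothesis \eqref{FRaf5E} to obtain boundedness, pass to a convergent subsequence, and conclude by closedness of $D$ and $E$ together with continuity of the norm. The extra details you supply (finiteness of the infimum, the explicit rearrangement giving $\|y_n\|+\|z_n\|\le c^{-1}(\|y_n-z_n\|+b)$, and closedness of affine subspaces in finite dimensions) are all sound.
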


\begin{proof}
Let $(y_h, z_h) \subset D \times E$ be a minimizing sequence. By
\eqref{FRaf5E}, $(y_h)$ and $(z_h)$ are bounded in $Z$. Passing
to subsequences, there is $(y,z) \in {Z\times Z}$ such that $(y_h,z_h) \to
(y,z)$. By the closedness of $E$ and $D$, it follows that $(y,z) \in D \times
E$. By the continuity of the norm,
\begin{equation}
\begin{split}
    &\inf_{(y',z') \in D \times E} \| y' - z' \|^2
    \leq
    \| y - z \|^2
    \\ &\qquad =
    \lim_{h\to\infty}
    \| y_h - z_h \|^2
    =
    \inf_{(y',z') \in D \times E} \| y' - z' \|^2 ,
\end{split}
\end{equation}
and $(y,z)$ is a DD solution.
\end{proof}

We note that condition (\ref{FRaf5E}) fails when the distance between $D$ and $E$ is minimized at infinity, in which case the minimizing sequences diverge and solutions fail to exist. This condition (which was called transversality in \cite{conti2018data}) is related to, but different from, the transversality condition that plays an important role in the probabilistic extension of DD, as evinced in Section~\ref{sec:concentration}.

\begin{example}[Linear trusses] {\rm A simple example is furnished by a linear-elastic response of the form $D_e = \{ \sigma_e = \mathbb{C}_e \epsilon_e \}$, where $\mathbb{C}_e$ are elastic moduli, and linear equilibrium and compatibility constraints of the form (\ref{9qHWzU}). This set-up is combining Examples~\ref{nBpq7d} and \ref{NAcwZE}.
A straightforward calculation \cite{kirchdoerfer2016data} then shows that
\begin{equation}\label{dI3roc}
    d^2(z,D)
    = \inf_{y\in D} \|y-z\|^2
    =
    \sum_{e=1}^m
        \frac{1}{4}
        w_e
        \mathbb{C}_e^{-1}
        (\sigma_e - \mathbb{C}_e \epsilon_e)
        \cdot
        (\sigma_e - \mathbb{C}_e \epsilon_e)
\end{equation}
and that the transversality condition (\ref{FRaf5E}) is equivalent to the condition that $B^T \mathbb{C} B$ $>$ $0$ with $\mathbb{C} = {\rm diag}(\mathbb{C}_1,\dots, \mathbb{C}_m)$, i.~e., if the stiffness matrix of the system is strictly positive definite. Under these conditions, Theorem~\ref{1Lublu} ensures existence. In addition, by the strict convexity of (\ref{dI3roc}) the solution is unique.} \hfill$\square$
\end{example}

\section{Inference by diagonal concentration}\label{sec:concentration}

We recall from the previous discussion that the states of the systems of interest can be identified with points in a phase space $Z = \mathbb{R}^N \times \mathbb{R}^N$ normed by some convenient norm such as (\ref{Poyet2}). In addition, the geometry, loading, equilibrium and compatibility constraints acting on the system define an affine subspace $E$ of $Z$, or constraint set, of dimension $N$ and co-dimension $N$. In the preceding deterministic formulation of the DD problem, the possible states of the material are additionally known to be in a material data set $D$ for sure. We wish to extend this theory to cases in which the material behavior, and possibly the loading, are inherently random and defined in probabilistic terms.

For a metric space $X$ we denote by $\mathcal M(X)$ the set of Radon measures on $X$. Assume that the behavior of the material is random and characterized by a positive  {\sl material likelihood measure} $\mu_D \in \mathcal{M}(Z)$ representing the {\sl likelihood} of observing a material state in the laboratory. We note that likelihood measures are not necessarily probability measures, indeed need not be finite, and are defined modulo positive multiplicative constants. For instance, if $\mu_D$ is absolutely continuous with respect to the Lebesgue measure, then
\begin{equation}
    \mu_D = L_D \mathcal{L}^{2N},
\end{equation}
where $L_D:Z\to[0,\infty)$ is a {\sl material likelihood function}. The function $\Phi_D:Z\to (-\infty,\infty]$,
\begin{equation}\label{gPUn6S}
    \Phi_D(y) := - \log L_D(y)
\end{equation}
is the corresponding {\sl material potential}. The measure $\mu_D$ can be understood as representing the likelihood of observing the material in a certain state, and can in principle be approximated by performing a large set of measurements and considering, on a suitable scale, the density of data points. Thus, regions of phase space that are sparsely covered by data are less likely to be observed than densely covered regions. Material likelihood measures that are singular with respect to the Lebesgue measure are also of interest. For instance, discrete empirical measures of the form
\begin{equation}\label{7KTLs5}
    \mu_D = \sum_{i = 1}^\infty c_i \delta_{y_i} ,
\end{equation}
play a central role in approximation. The deterministic case corresponds to the case $\mu_D = \mathcal{H}^N \LL D$, where the data set $D$ is a graph in $Z$ of dimension $N$, representing the material law. For example, one could have $D=D_1\times\cdots\times D_m$, with $D_e$ as in \eqref{NVajuA} or a corresponding nonlinear generalization.

\begin{example}[Local material behavior] \label{dzeXXX}{\rm
Material behavior is often {\sl local} and can be characterized over each local phase space $Z_e = \mathbb{R}^d \times \mathbb{R}^d$ by a local material measure $\mu_{D,e} \in \mathcal{M}(Z_e)$. Assuming that the behavior of the members is independent, the {\sl global} material measure is then given by the product measure
\begin{equation}
    \mu_D = \mu_{D,1} \times \cdots \times \mu_{D,m} .
\end{equation}
If the local material measures $\mu_{D,e}$ are defined in terms of {\sl local} likelihood functions $L_{D,e}$ and {\sl local} potentials $\Phi_{D,e}$, then the {\sl global} likelihood function is
\begin{equation}
    L_D = L_{D,1} L_{D,2}\cdots L_{D,m-1}L_{D,m} ,
\end{equation}
i.~e., the product of the local likelihood functions of the members, and the {\sl global} potential is
\begin{equation}
    \Phi_D = \Phi_{D,1} + \cdots + \Phi_{D,m} ,
\end{equation}
i.~e., the sum of the local potentials of the members. \hfill$\square$}
\end{example}

Without much additional complexity, we may assume that the boundary conditions and loading are also random and characterized by a Radon {\sl constraint likelihood measure} $\mu_E \in \mathcal{M}(Z)$ representing the relative likelihood or different boundary conditions and loading acting on the system. The case of deterministic loading is recovered by setting $\mu_E = \mathcal{H}^N \LL E$, with $E\subseteq Z$ an affine subspace of dimension $N$. More generally, we may suppose that the material behavior and the constraints on the system are jointly characterized by a likelihood measure $\mu \in \mathcal{M}(Z\times Z)$. If material behavior and constraints are uncorrelated, the likelihood measure is the product measure
\begin{equation}\label{VahD8C}
    \mu = \mu_D \times \mu_E .
\end{equation}
The joint likelihood function $\mu$ thus defined fully characterizes the system, both as regards material behavior and constraints.

Within this framework, the classical inference problem is to determine the likelihood of observing a material state $y \in Z$ and an admissible state $z \in Z$ conditioned to $y=z$, i.~e., conditioned to the material and admissible states being equal. We expect the likelihood measure $\mu_\infty$ of such pairs of states to be a certain concentration of $\mu$ to the diagonal ${\rm diag}(Z \times Z)$ (for the precise notion, see Definition~\ref{def:diag}). For $\mu\in\mathcal{M}(X)$, we define by
\begin{equation}
    |\mu|:=\mu(X) \in [0,\infty]
\end{equation}
the \emph{total variation} of $\mu$.
If $\mu_\infty \in \mathcal{M}(Z \times Z)$ is finite and non-degenerate, i.~e.,
\begin{equation}
    0 < | \mu_\infty | < +\infty ,
\end{equation}
then $\mu_\infty$ can be normalized to define the probability measure
\begin{equation}\label{fXM3k2}
    \nu_\infty := \frac{\mu_\infty}{| \mu_\infty |} ,
\end{equation}
which gives the probability of material and admissible states of the system and the expectation
\begin{equation}
    \mathbb{E}_\infty(f) = \int_{Z\times Z} f(y,z) \, d\nu_\infty(y,z)
\end{equation}
of bounded continuous functions defined on the extended phase space, $f \in C_b(Z\times Z)$.  We shall show below that $\nu_\infty$ concentrates on the diagonal, so that univariate functions $f\in C_b(Z)$ are sufficient to fully characterize it.
The probability measure $\nu_\infty$, eq.~(\ref{fXM3k2}), may be regarded as the solution of the classical inference problem defined by $\mu$. The marginals
\begin{equation}\label{ZZp6MH}
    \pi_D \nu_\infty
    =
    \pi_E \nu_\infty ,
\end{equation}
are the corresponding probability measures of material states and system outcomes, respectively.

\begin{example}[Random trusses]{\rm
Consider a truss such as defined in Example~\ref{nBpq7d}, where $d=1$, $w_e=1$, and with material behavior characterized by the local likelihood functions
\begin{equation}\label{nJFGU9}
    L_{D,e}(\epsilon_e,\sigma_e)
    =
    \exp
    \Big(
        -\frac{1}{2}
        \mathbb{C}_e^{-1}
        (\sigma_e-\mathbb{C}_e\epsilon_e)
        \cdot
        (\sigma_e-\mathbb{C}_e\epsilon_e)
    \Big) .
\end{equation}
We begin by parameterizing the constraint space $E$. Recall that $m$ is the number of members of the truss, $Z = \mathbb{R}^m \times \mathbb{R}^m$ is the phase space and we assume $n < m$ the number of unconstrained degrees of freedom. Let $l = m-n$ and $A \in \mathbb{R}^{m\times l}$ the matrix whose columns define a basis of ${\rm Ker}(B^T)$. Then $\sigma$ satisfies the equilibrium condition $B^T\sigma = 0$ if and only if there is $v \in \mathbb{R}^l$ such that
\begin{equation}\label{RYaYcU}
    \sigma = A v .
\end{equation}
We may thus regard $v$ as a discrete Airy potential and $A$ as a discrete Airy operator. In addition, suppose that $\epsilon$ satisfies the compatibility condition $A^T \epsilon = 0$ if and only if there are displacements $u$ such that $\epsilon = B u$. Then, the constraint set $E_0$ through the origin, corresponding to $f=0$ and $g=0$, admits the representation
\begin{equation}
    E_0
    =
    \{
    (\epsilon,\sigma) \in Z = \mathbb{R}^m \times \mathbb{R}^m
    \, : \,
    \epsilon = B u,\ u \in \mathbb{R}^n;
    \ \sigma = A v,\ v \in \mathbb{R}^l \} .
\end{equation}
The general constraint set is then the translation
\begin{equation}
    E = z_0 + E_0,
    \quad
    z_0 = (\epsilon_0,\sigma_0) ,
    \quad
    \epsilon_0 = g,
    \quad
    B^T \sigma_0 = f  .
\end{equation}
If $\mathop\mathrm{rank} B=n$ then the dimension of $Z$ is $2N = 2m$ and the dimension of $E$ is $N = m = n + l$. We may regard $(u,v) \in \mathbb{R}^n \times \mathbb{R}^l$ as a set of coordinates parameterizing $E$. In this representation, the likelihood function of outcomes takes the form
\begin{align}
    L(u,v)
    &:= \prod_{e=1}^m L_{D,e}\left((\epsilon_0+Bu)_e, (\sigma_0+Av)_e\right)\\
        &=
    \exp
    \Big(
        -
        \frac{1}{2}
         \mathbb{C}^{-1} q\cdot q
    \Big) , 
    \quad q:=Av+\sigma_0-\mathbb{C}(Bu+\epsilon_0)\in \mathbb{R}^m
\end{align}
where we write $\mathbb{C} := {\rm diag}(\mathbb{C}_1, \cdots, \mathbb{C}_m)$.
We assume that $Z$ is the direct sum of $A\mathbb R^l$ and $\mathbb C B \mathbb R^n$. Since $B^TA=0$, from the properties of Gaussian integrals, the corresponding normalizing factor is computed as
\begin{equation}
\int_{E}\prod_{e=1}^m L_{D,e}\left(\epsilon_e, \sigma_e\right)d\mathcal H^N(\epsilon,\sigma)
 =\sqrt{\det(A^TA)\det(B^TB)} \int_{\mathbb R^n\times\mathbb R^l} L(u,v) dudv 
\end{equation}
with
\begin{equation}
\int_{\mathbb R^m} L(u,v) \, du \, dv
    =
    \frac{1}{\sqrt{\det(B^T \mathbb{C} B /2\pi )}}
    \frac{1}{\sqrt{\det(A^T \mathbb{C}^{-1} A /2\pi )}} .
\end{equation}
Again, from the properties of Gaussian integrals, the expected values $(\bar{u},\bar{v})$ of the coordinates $(u,v)$ are obtained as the unique solution of $A\bar v+\sigma_0-\mathbb C B\bar u - \mathbb C \epsilon_0 =0$ or minimizing the potential $\Phi=-\log(L)$, with the result,
\begin{subequations}
\begin{align}
    &
    \bar{u} = (B^T\mathbb{C}B)^{-1} B^T(\sigma_0 - \mathbb{C} \epsilon_0) ,
    \\ &
    \bar{v} = (A^T\mathbb{C}^{-1}A)^{-1} A^T(\epsilon_0 - \mathbb{C}^{-1} \sigma_0) ,
\end{align}
\end{subequations}
which are computed by inverting the stiffness and compliance matrices of the truss, $B^T \mathbb{C} B$ and $A^T\mathbb{C}^{-1}A$, respectively. The probability density of outcomes then follows as
\begin{equation}
\begin{split}
    &
    \rho(u,v)
    =
    \sqrt{\det(B^T \mathbb{C} B /2\pi )}
    \sqrt{\det(A^T \mathbb{C}^{-1} A /2\pi )}
    \times \\ &
    \exp
    \Big(
        -
        \frac{1}{2}
        (B^T\mathbb{C}B)(u-\bar{u}) \cdot (u-\bar{u})
        -
        \frac{1}{2}
        (A^T\mathbb{C}^{-1}A)(v-\bar{v}) \cdot (v-\bar{v})
    \Big) ,
\end{split}
\end{equation}
which provides a full account of the probability of outcomes.
\hfill$\square$}
\end{example}

\section{Thermalization}\label{DOsT8G}

The concentration operation just described is a non-trivial proposition for general measures. For instance, in the case $\mu = \mu_D \times \mu_E$, the problem of finding the diagonal concentration of $\mu$ is related to the problem of intersection of measures \cite{Federer:1969,mattila1984hausdorff}. 
Here, for definiteness, we opt for characterizing diagonal concentration by means of a limiting procedure based on thermalization and restrict attention to cases in which this procedure is successful. In particular, we restrict to the following three cases: (1) measures $\mu$ that are absolutely continuous with respect to the Lebesgue measure with a regular density, see Theorem~\ref{thm:concentration-Lebesgue}, Section~\ref{LlWZ3G}; (2) the deterministic setting where $\mu =  (\mathcal{H}^{N}\LL D) \times (\mathcal{H}^{N}\LL E)$, see Section~\ref{sec:det}; or (3) measures $\mu$ that are a product of an absolutely continuous measure with a deterministic measure of the form $\mu = (L_D \, \mathcal{L}^{2N}) \times (\mathcal{H}^{N}\LL E)$ or $\mu =(\mathcal{H}^{N}\LL D)\times (L_E \, \mathcal{L}^{2N})$, see Theorem~\ref{thm:det-Leb}.

{We introduce a concept of thermalization-concentration. Starting from a general measure on $Z\times Z$, we construct a sequence $\mu_\beta$ which  as $\beta\to\infty$ suppresses the contributions away from the diagonal. We first show that, under certain assumptions, $\mu_\beta$ converges to a limiting measure $\mu_\infty$ supported on the diagonal subset of $Z\times Z$, which one can of course identify with $Z$. We then (in Section \ref{secapproximation})  show  that this operation is, in a certain range, robust with respect to approximation. Specifically, for sequences of measures $\mu_h$ that approximates $\mu$, we shall identify sequences $(\mu_h)_{\beta_h}$, with $\beta_h\to\infty$, which converge to the same $\mu_\infty$. We stress that the measure $\mu$ characterizes the (practically, unknown) ``true'' material behavior, whereas $\mu_h$ are approximations that can (in principle) be obtained from sets of measurements.}

\subsection{Preliminaries}\label{sec:prelims}
For every $\mu \in \mathcal{M}(Z\times Z)$ and $\beta > 0$, define the corresponding thermalized measure as
\begin{equation}\label{v4coYU}
    \mu_\beta
    :=w_\beta \mu ,
    \quad
    w_\beta(y,z) := B_\beta^{-1} {\rm e}^{-\beta \|y-z\|^2}\,,\quad
    B_\beta
    :=
    \int_{Z}
        {\rm e}^{-\beta \|\xi\|^2}
    \, d\xi.
\end{equation}
We note that the scaling relation
\begin{equation}\label{eqBbetascal}
    B_\beta = B_{\beta_0} \left( \frac{\beta}{\beta_0} \right)^{-N} ,
    \qquad
    B_{\beta_0}
    =
    \int_{Z}
        {\rm e}^{-\beta_0 \|\xi\|^2}
    \, d\xi ,
\end{equation}
follows directly from one-homogeneity of the norm. Further, we consider weak convergence of Radon measures in $\mathcal M(X)$, denoted ${\mu_j} \rightharpoonup \mu$ or $\mu=\mathop{{w}{-}\lim}_{j \to \infty}\mu_j$, to mean
\begin{equation}\label{eqdefweakRadon}
    \int_X f\,d\mu_j\to\int_X f\,d\mu 
    \qquad \text{ as } j\to \infty 
    \qquad \text{ for all } f\in C_c(X)
\end{equation}
with $C_c(X)$ the set of continuous functions with compact support from the metric space $X$ to $\mathbb R$.  For $\mu\in\mathcal M(X)$ and $f\in C_c(X)$ we also write briefly $\mu(f):=\int_X fd\mu$. 

We denote by $\mathcal M_b(X):=\{\mu\in\mathcal M(X): \mu(X)<\infty\}$ the set of bounded Radon measures, and denote by the same symbol weak convergence of bounded (or probability) measures,
\begin{equation}\label{eqdefweakProb}
    \int_X f\,d\mu_j\to\int_X f\,d\mu 
    \qquad \text{ as } j\to \infty 
    \qquad \text{ for all } f\in C_b(X)\,,
\end{equation}
with $C_b(X)$ the set of continuous bounded functions from $X$ to $\mathbb R$. We clarify from the context which convergence is intended.
\begin{definition}[Diagonal concentration, transversality]\label{def:diag}
Let $\mu \in \mathcal{M}(Z\times Z)$ be such that:
\begin{itemize}
    \item[i)] (Boundedness)
    There is $\beta_*\ge 0$ such that
    $\mu_\beta \in \mathcal{M}_b(Z\times Z)$ for all $\beta > \beta_*$.
    \item[ii)] (Transversality) The weak limit
\begin{equation}\label{em4LG7}
    \mu_\infty := \mathop{{w}{-}\lim}_{\beta \to +\infty} \, \mu_\beta 
\end{equation}
exists in $\mathcal{M}_b(Z\times Z)$ (in the sense of \eqref{eqdefweakProb}).
\end{itemize}
Then, we call $\mu_\infty$ the diagonal concentration of $\mu$. We say that $\mu$ is transversal if it has a diagonal concentration.
\end{definition}

\begin{remark}
 \label{def:diag2}
If $\mu=\mu_D\times \mu_E$, then $\mu_\infty$ can be interpreted as the intersection $\mu_D\cap \mu_E$. If additionally $\mu_D=f\mathcal L^{2N}$ and  $\mu_E=g\mathcal L^{2N}$, with $f,g\in C^0(Z)$, then $\mu_\infty=\mu_D\cap \mu_E=fg\mathcal L^{2N}$, in agreement with the definition given in 
 \cite[Sect.~4.3.20]{Federer:1969} or
\cite[Sect.~3]{mattila1984hausdorff}.
\end{remark}

We verify that the diagonal concentration $\mu_\infty$, if it exits, is indeed supported on the diagonal.

\begin{lemma}\label{lemmadiagconc}
Let $\mu\in \mathcal{M}(Z\times Z)$ have a diagonal concentration $\mu_\infty$. Then, ${\rm supp}(\mu_\infty) \subseteq {\rm diag}(Z\times Z)$.
\end{lemma}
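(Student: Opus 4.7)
The plan is to establish the claim by showing that any $(y_0,z_0)\in (Z\times Z)\setminus {\rm diag}(Z\times Z)$ possesses an open neighborhood of $\mu_\infty$-measure zero, which by the usual characterization of support yields ${\rm supp}(\mu_\infty)\subseteq {\rm diag}(Z\times Z)$. To this end, I would fix such a point, set $c:=\tfrac12\|y_0-z_0\|>0$ and $r:=c/2$, and observe via the reverse triangle inequality that for every $(y,z)$ in the closed $\vertiii{\cdot}$-ball $\overline{B}_r(y_0,z_0)$ one has $\|y-y_0\|\le r$, $\|z-z_0\|\le r$, and hence $\|y-z\|\ge \|y_0-z_0\|-2r = c$. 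I would then select a bump function $\varphi\in C_c(Z\times Z)$ with $0\le\varphi\le 1$, $\varphi(y_0,z_0)>0$, and ${\rm supp}(\varphi)\subseteq \overline{B}_r(y_0,z_0)$.

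The key step is the elementary pointwise estimate on ${\rm supp}(\varphi)$:
\begin{equation*}
    \int_{Z\times Z}\varphi\,d\mu_\beta
    = B_\beta^{-1}\int_{{\rm supp}(\varphi)}\varphi(y,z)\,{\rm e}^{-\beta\|y-z\|^2}\,d\mu(y,z)
    \le B_\beta^{-1}\,{\rm e}^{-\beta c^2}\,\mu({\rm supp}(\varphi)).
\end{equation*}
Here $\mu({\rm supp}(\varphi))<\infty$ because $\mu$ is a Radon measure and ${\rm supp}(\varphi)$ is compact. By the scaling relation \eqref{eqBbetascal}, $B_\beta^{-1}=B_1^{-1}\beta^N$ grows only polynomially in $\beta$, while ${\rm e}^{-\beta c^2}$ decays exponentially as $\beta\to+\infty$, so the right-hand side tends to $0$ and therefore $\int\varphi\,d\mu_\beta\to 0$. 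Invoking the weak convergence $\mu_\beta\rightharpoonup\mu_\infty$ in $\mathcal{M}_b(Z\times Z)$ posited in Definition~\ref{def:diag}(ii) on the test function $\varphi\in C_c(Z\times Z)\subseteq C_b(Z\times Z)$ yields $\int\varphi\,d\mu_\infty=0$.

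Since $\mu_\infty\ge 0$ and $\varphi$ is strictly positive on some open neighborhood $V$ of $(y_0,z_0)$, this forces $\mu_\infty(V)=0$, and hence $(y_0,z_0)\notin {\rm supp}(\mu_\infty)$. I do not foresee a serious obstacle: the only items requiring care are matching the test-function class to the mode of weak convergence invoked and using the Radon property to bound $\mu$ on compact sets, both automatic here. The essential content of the argument is the elementary fact that the exponential Gaussian suppression away from the diagonal beats the polynomial blow-up of $B_\beta^{-1}$.
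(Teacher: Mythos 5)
Your proof is correct and rests on the same key estimate as the paper's: on a compact set at distance $\ge c$ from the diagonal, $\mu_\beta$ is bounded by $B_1^{-1}\beta^N{\rm e}^{-\beta c^2}\mu(K)\to 0$, using local finiteness of the Radon measure $\mu$. The only (harmless) difference is that you transfer the limit to $\mu_\infty$ via compactly supported test functions and the definition of support, whereas the paper evaluates $\mu_\beta$ on compact sets with $\mu_\infty$-null boundary and then covers the complement of the diagonal countably.
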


\begin{proof}
Fix any compact set $K\subset (Z\times Z)\setminus {\rm diag}(Z \times Z)$ with $\mu_\infty(\partial K)=0$ and let $\delta:={\rm dist}(K, {\rm diag }(Z \times Z))
=\min\{\vertiii{(y,z)-(w,w)}: (y,z)\in K, w\in Z\}$. Then $\|y-z\|\ge \delta$ for all $(y,z)\in K$, and we compute,
\begin{equation}
    \mu_\beta(K)
    =
    \int_K B_1^{-1}\beta^{{N}} {\rm e}^{-\beta\|y-z\|^2} d\mu(y,z)
    \le
    B_1^{-1}\beta^{{N}} {\rm e}^{-\beta \delta^2} \mu(K).
\end{equation}
Since $\mu_\infty(\partial K)=0$, we have
\begin{equation}
    \mu_\infty(K)
    =
    \lim_{\beta\to+\infty} \mu_\beta(K)
    \le
    \lim_{\beta\to+\infty}
    B_1^{-1}\beta^{{N}} {\rm e}^{-\beta \delta^2} \mu(K)
    =
    0 .
\end{equation}
The set $(Z\times Z)\setminus  {\rm diag }(Z \times Z)$ can be covered by countably many such sets $K$, and since $\mu_\infty$ is a Radon measure, $\mu_\infty(Z\times Z\setminus  {\rm diag }(Z \times Z))=0$.
\end{proof}

We also note that for purposes of characterizing $\mu_\infty$ it suffices to consider univariate test functions.

\begin{lemma}
Assume that $\mu$ has a diagonal concentration $\mu_\infty$, and assume that $\nu\in\mathcal M_b(Z\times Z)$ is such that $\nu(Z\times Z\setminus  {\rm diag }(Z \times Z))=0$ and that
\begin{equation}\label{eqtestuniva}
    \int_{Z\times Z} f(y) d\nu(y,z) =
    \int_{Z\times Z} f(y) d\mu_\infty(y,z) \quad \text{ for all } f\in C_b(Z).
\end{equation}
Then $\mu_\infty=\nu$.
\end{lemma}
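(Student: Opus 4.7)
The plan is to reduce the equality of the two bounded Radon measures $\mu_\infty$ and $\nu$ on $Z\times Z$ to testing against bivariate bounded continuous functions, and then to exploit that both measures are concentrated on $\mathrm{diag}(Z\times Z)$ to replace each bivariate test function by a univariate one, at which point the hypothesis \eqref{eqtestuniva} closes the argument.

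First, by the preceding Lemma~\ref{lemmadiagconc}, $\mathrm{supp}(\mu_\infty)\subseteq \mathrm{diag}(Z\times Z)$, so $\mu_\infty\bigl((Z\times Z)\setminus\mathrm{diag}(Z\times Z)\bigr)=0$; the same vanishing holds for $\nu$ by assumption. Given any $g\in C_b(Z\times Z)$, I set $f(w):=g(w,w)$, which is an element of $C_b(Z)$ because the diagonal embedding $w\mapsto (w,w)$ is continuous and $g$ is continuous and bounded. Because both measures put no mass outside the diagonal, I can write
\[
\int_{Z\times Z} g(y,z)\,d\mu_\infty(y,z)
= \int_{Z\times Z} g(y,y)\,d\mu_\infty(y,z)
= \int_{Z\times Z} f(y)\,d\mu_\infty(y,z),
\]
and the analogous identity for $\nu$. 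The hypothesis \eqref{eqtestuniva} applied to $f$ then yields $\int g\,d\mu_\infty=\int g\,d\nu$.

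Finally, since $Z\times Z=\mathbb R^{4N}$ is a locally compact separable metric space and $\mu_\infty,\nu\in\mathcal M_b(Z\times Z)$, two such measures that agree on all of $C_b(Z\times Z)$ (or even on $C_c(Z\times Z)$, by Riesz representation) must coincide. Hence $\mu_\infty=\nu$.

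The only subtle point, rather than a real obstacle, is the legitimacy of replacing $g(y,z)$ with $g(y,y)$ under the integral: this requires the measures to be \emph{genuinely} null off the diagonal (not merely to have support therein in a weaker topological sense). The definition of support for Radon measures, together with the fact that $\mathrm{diag}(Z\times Z)$ is closed, ensures this for $\mu_\infty$, and the explicit assumption on $\nu$ provides it directly for $\nu$, so no further work is needed.
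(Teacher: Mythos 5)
Your proof is correct and follows essentially the same route as the paper's: both arguments take an arbitrary $g\in C_b(Z\times Z)$, use Lemma~\ref{lemmadiagconc} and the hypothesis on $\nu$ to replace $g(y,z)$ by $g(y,y)$ under each integral, and then apply \eqref{eqtestuniva} to $f(y):=g(y,y)$. Your explicit remarks on why ``support contained in the closed diagonal'' yields a genuine null set off the diagonal, and on why agreement against all of $C_b(Z\times Z)$ forces equality of the two bounded Radon measures, are just careful elaborations of steps the paper leaves implicit.
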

\begin{proof}
Fix any $g\in C_b(Z\times Z)$. By $\nu(Z\times Z\setminus  {\rm diag }(Z \times Z))=0$, it follows that $g(y,z)=g(y,y)$ for $\nu$-almost every $(y,z)\in Z\times Z$. By Lemma \ref{lemmadiagconc}, the same holds for $\mu_\infty$. Using \eqref{eqtestuniva} with $f(y):=g(y,y)$, $f\in C_b(Z)$ leads to
\begin{equation}
\begin{split}
    \int_{Z\times Z} g(y,z) d\nu(y,z) =&
    \int_{Z\times Z} g(y,y) d\nu(y,z) \\
    =&
    \int_{Z\times Z} g(y,y) d\mu_\infty(y,z)
    =
    \int_{Z\times Z} g(y,z) d\mu_\infty(y,z) ,
\end{split}
\end{equation}
which concludes the proof.
\end{proof}

\subsection{Stochastic Loading}\label{LlWZ3G}
The following theorem provides an example of trans\-versality in the simple case in which $\mu$ is absolutely continuous with respect to the Lebesgue measure. For $\lambda>0$ consider the linear change of variables $S_\lambda:Z\times Z\to Z\times Z$ given by
\begin{equation}\label{hV5vEE}
    (y,z)
    =
    S_\lambda(\xi,\eta)
    :=
    \Big(
        \frac{\xi + \frac{\eta}{\lambda}}{\sqrt{2}} ,
        \frac{\xi - \frac{\eta}{\lambda}}{\sqrt{2}}
    \Big).
\end{equation}
This mapping can be inverted to give
\begin{equation}
    (\xi,\eta)
    =
    S_\lambda^{-1}(y,z)
    =
    \Big(
        \frac{y + z}{\sqrt{2}} ,
        \lambda \,
        \frac{y - z}{\sqrt{2}}
    \Big)\,.
\end{equation}
For $\beta_0>0$, we further introduce the notation
\begin{equation}\label{eqdefbarw}
     \bar w_{\beta_0}(\eta) := B_{\beta_0}^{-1} {\rm e}^{-2\beta_0 \|\eta\|^2}\,.
\end{equation}

\begin{theorem}\label{thm:concentration-Lebesgue}
Let $Z = \mathbb{R}^{2N}$ and $\mu \in \mathcal{M}(Z\times Z)$. Assume:
\begin{itemize}
\item[i)] (Regularity) $\mu = L \, \mathcal{L}^{2N} \times \mathcal{L}^{2N}$, $L \in L^1_{\rm loc}(Z\times Z)$ non-negative.
\item[ii)] (Continuity) There exists a function $\hat L:Z\to\mathbb{R}$ such that the pointwise limit
\begin{equation}
    \hat L\left(\frac{\xi}{\sqrt{2}}\right) =
    \lim_{\lambda\to+\infty}
    L\Big(S_\lambda(\xi,\eta)\Big) ,
\end{equation}
holds for $\mathcal L^{4N}$-a.~e.~$(\xi,\eta)\in Z\times Z$.
\item[iii)] (Equi-integrability) There is a function $g \in L^1(Z\times Z)$, $\lambda_0>0$ and $\beta_0 > 0$ such that
\begin{equation}
    \bar w_{\beta_0}(\eta)L\left(S_\lambda(\xi,\eta)\right)
    \leq
    g(\xi,\eta) 
\end{equation}
for $\mathcal L^{4N}$-a.~e.~$(\xi,\eta)\in Z\times Z$, and for all $\lambda \ge \lambda_0$.
\end{itemize}
Then, the weak limit $\mu_\infty$ of $\mu_\beta$ as $\beta \to +\infty$ exists in $\mathcal{M}_b(Z\times Z)$ (in the sense of \eqref{eqdefweakProb}) and
\begin{equation}\label{PIo3S4}
    \mu_\infty(f)
    =
    \int_Z
        f(\xi,\xi) \, \hat L(\xi)
    \, d\xi ,
\end{equation}
for all $f \in C_b(Z\times Z)$.
\end{theorem}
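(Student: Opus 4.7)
The plan is to reduce the theorem to a single application of dominated convergence after a carefully scaled change of variables in $(y,z)$. I would set $\lambda:=\sqrt{\beta/\beta_0}$ and transform the integral defining $\mu_\beta(f)$ via $(y,z)=S_\lambda(\xi,\eta)$. Under this map $\|y-z\|^2=2\|\eta\|^2/\lambda^2$, so $\beta\|y-z\|^2=2\beta_0\|\eta\|^2$; the Jacobian is $|\det DS_\lambda|=\lambda^{-2N}$; and the scaling relation (\ref{eqBbetascal}) gives $B_\beta^{-1}=B_{\beta_0}^{-1}\lambda^{2N}$. The two factors $\lambda^{\pm 2N}$ cancel, and one obtains, for every $f\in C_b(Z\times Z)$,
\begin{equation*}
\mu_\beta(f)=\int_{Z\times Z} f(S_\lambda(\xi,\eta))\,\bar w_{\beta_0}(\eta)\,L(S_\lambda(\xi,\eta))\,d\xi\,d\eta,
\end{equation*}
with the entire $\beta$-dependence absorbed into the arguments of $f$ and $L$ and the Gaussian factor $\bar w_{\beta_0}$ now $\beta$-independent.

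I would then pass to the limit $\beta\to+\infty$ (equivalently $\lambda\to+\infty$) inside this integral. Since $S_\lambda(\xi,\eta)\to(\xi/\sqrt 2,\xi/\sqrt 2)$ pointwise, continuity of $f$ combined with hypothesis (ii) yields a.e.\ convergence of the integrand to $f(\xi/\sqrt 2,\xi/\sqrt 2)\,\bar w_{\beta_0}(\eta)\,\hat L(\xi/\sqrt 2)$, and the bound $|f|\le\|f\|_\infty$ together with hypothesis (iii) supplies the $\lambda$-uniform integrable majorant $\|f\|_\infty\,g$. Dominated convergence then applies. A second use of (\ref{eqBbetascal}) evaluates $\int_Z \bar w_{\beta_0}(\eta)\,d\eta=B_{\beta_0}^{-1}B_{2\beta_0}=2^{-N}$, and the linear substitution $\xi\mapsto\sqrt 2\,\xi$ in the remaining $\xi$-integral produces formula (\ref{PIo3S4}) exactly.

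Finally, the boundedness hypothesis (i) of Definition~\ref{def:diag} together with finiteness of $\mu_\infty$ as an element of $\mathcal M_b$ comes out of the same change-of-variables identity applied with $f\equiv 1$: this gives $|\mu_\beta|\le\|g\|_{L^1(Z\times Z)}$ for all $\beta\ge\beta_0\lambda_0^2$, a bound that passes to the limit. Lemma~\ref{lemmadiagconc} then automatically places $\mathrm{supp}(\mu_\infty)$ on $\mathrm{diag}(Z\times Z)$, consistent with the form of (\ref{PIo3S4}). The crux of the argument, and the only nonroutine step, is the identification of the scaling $\lambda=\sqrt{\beta/\beta_0}$: without it the weight $w_\beta$ concentrates on a Lebesgue-null set as $\beta\to+\infty$ and no direct limit is available. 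Hypotheses (ii) and (iii) are calibrated precisely so that after this substitution both pointwise convergence and an $L^1$ majorant hold; everything else reduces to elementary bookkeeping of Gaussian normalization constants.
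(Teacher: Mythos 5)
Your proposal is correct and follows essentially the same route as the paper's proof: the same rescaled change of variables $(y,z)=S_\lambda(\xi,\eta)$ with $\lambda=\sqrt{\beta/\beta_0}$, the same cancellation of the Jacobian against $B_\beta^{-1}$ yielding the $\beta$-independent weight $\bar w_{\beta_0}$, dominated convergence via (ii)--(iii), and the same Gaussian bookkeeping $B_{\beta_0}^{-1}B_{2\beta_0}=2^{-N}$ followed by the substitution $\xi\mapsto\sqrt2\,\xi$. The only cosmetic difference is that the paper first derives the identity for $f\in C_c$ and then invokes (iii) to extend it to $C_b$ for $\beta\ge\beta_0\lambda_0^2$, which you handle equivalently with the $f\equiv1$ bound.
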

In other words, the conditions of Theorem~\ref{thm:concentration-Lebesgue}
guarantee that the measure $\mu$ has a diagonal concentration according to
Definition~\ref{def:diag}. The theorem also implies that the diagonal limit
$\hat L(\xi)$ is integrable.
\begin{proof}
By (i), for every $f \in C_c(Z\times Z)$ and $\lambda>0$ we have
\begin{align*}
    \mu_\beta(f)
    &=
    \int_{Z\times Z}
        f(y,z)
        B_\beta^{-1}
        {\rm e}^{-\beta \|y-z\|^2}
        L(y,z)
    \, dy \, dz \\
     &=
    \int_{Z\times Z}
        f\Big(S_\lambda(\xi,\eta)\Big)
        B_\beta^{-1}
        {\rm e}^{-2 \beta \lambda^{-2}\|\eta\|^2}
        L\Big(S_\lambda(\xi,\eta)\Big)
    \, \lambda^{-2N} d\xi \, d\eta\,,
\end{align*}
using the change of variables \eqref{hV5vEE}.
Let $\lambda
    =
    \sqrt{\beta/\beta_0}$,
with $\beta_0 > 0$ as in (iii).
With \eqref{eqdefbarw} and \eqref{eqBbetascal} we obtain
\begin{equation}\label{fR4kzk}
\begin{split}
    &
    \mu_\beta(f)
    = 
    \int_{Z\times Z}
        f\Big(S_\lambda(\xi,\eta)\Big)
        \bar w_{\beta_0}(\eta)
        L\Big(S_\lambda(\xi,\eta)\Big)
    \, d\xi \, d\eta .
\end{split}
\end{equation}
By (iii), if $\beta\ge \beta_*:=\beta_0\lambda_0^2$ then $\mu_\beta\in\mathcal M_b(Z\times Z)$ and the same representation holds for any $f\in C_b(Z\times Z)$. By (ii-iii) and the dominated convergence theorem,
\begin{equation}
    \lim_{\beta \to +\infty} \mu_\beta(f)
    =
    \int_{Z\times Z}
        f\Big(\frac{\xi}{\sqrt{2}}, \frac{\xi}{\sqrt{2}}\Big)
        \bar w_{\beta_0}(\eta)
        \hat L\Big(\frac{\xi}{\sqrt{2}}\Big)
    \, d\xi \, d\eta .
\end{equation}
Finally, by (iii) and Fubini's theorem,
\begin{equation}
    \lim_{\beta \to +\infty} \mu_\beta(f)
    =
    \int_Z
        f\Big(\frac{\xi}{\sqrt{2}}, \frac{\xi}{\sqrt{2}}\Big)
        \hat L\Big(\frac{\xi}{\sqrt{2}}\Big)
        2^{-N}
    \, d\xi
    =:
    \mu_\infty(f) .
\end{equation}
for any $f\in C_b(Z\times Z)$. The change of variables $\bar\xi:=\xi/\sqrt2$ leads to  \eqref{PIo3S4}.
\end{proof}

The following example further illustrates the transversality property of Definition~\ref{def:diag}.

\begin{example}[Sliding Gaussians]{\rm
With $y=(y_1,y_2) \in \mathbb{R}^N \times \mathbb{R}^N = Z$ and $z=(z_1,z_2) \in \mathbb{R}^N \times \mathbb{R}^N = Z$, let
\begin{equation}
    L_D(y)
    :=
    {\rm e}^{-\frac{1}{2} |a_1 y_1 + a_2 y_2|_N^2} ,
    \quad
    L_E(z)
    :=
    {\rm e}^{-\frac{1}{2} |b_1 z_1 + b_2 z_2|_N^2} ,
\end{equation}
with $a_1$, $a_2$, $b_1$, $b_2 \in \mathbb{R}$, $a_1^2+a_2^2 > 0$ and $b_1^2+b_2^2 > 0$, and $|\cdot|_n$ the Euclidean norm in $\mathbb R^n$ for $n=2,N$. We note that $L_D$ is constant on the $N$-dimensional  affine subspaces $\{a_1 y_1 + a_2 y_2 = {\rm constant}\}$ and is an (unnormalized) Gaussian on the complementary $N$-dimensional subspaces $\{(y_1,y_2) = (a_1 v,a_2 v),\, v\in \mathbb{R}^N\}$. Likewise, $L_E$ is constant on the $N$-dimensional affine subspaces $\{b_1 z_1 + b_2 z_2 = {\rm constant}\}$ and is an (unnormalized) Gaussian on the complementary $N$-dimensional subspaces $\{(z_1,z_2) =( b_1 w,b_2 w),\ w \in \mathbb{R}^N\}$. Thus, $\mu = L_D(y)L_E(z) \, \mathcal{L}^{2N} \times \mathcal{L}^{2N}$ is a non-negative Radon measure with density in $L^1_\mathrm{loc}(Z\times Z)$, but it is not finite. Condition ii) in Theorem~\ref{thm:concentration-Lebesgue} follows immediately from continuity of $L_D$ and $L_E$, and for $\xi=(\xi_1,\xi_2)\in \mathbb{R}^N\times\mathbb{R}^N$ we obtain
\begin{equation}
    \hat L(\frac{\xi}{\sqrt2})
    =
    L_D(\frac{\xi}{\sqrt2}) L_E(\frac{\xi}{\sqrt2})
    =
    {\rm e}^{-\frac{1}{4} \langle\xi,  Q\xi\rangle} ,
\end{equation}
with $Q\in\mathbb{R}^{2N\times 2N}$ given by
\begin{equation*}
    Q
    :=
    \begin{pmatrix}
        (a_1^2+b_1^2)I_N & (a_1a_2+b_1b_2)I_N \\
        (a_1a_2+b_1b_2)I_N & (a_2^2+b_2^2)I_N
    \end{pmatrix}\,.
\end{equation*}
Then, indeed, $\langle\xi,Q\xi \rangle = |a_1 \xi_1 + a_2 \xi_2|_N^2 + |b_1 \xi_1 + b_2 \xi_2|_N^2$. Suppose, specifically, that
\begin{equation}
    b_1 = a_1 \cos\theta - a_2 \sin\theta ,
    \quad
    b_2 = a_1 \sin\theta + a_2 \cos\theta ,
\end{equation}
i.~e., $\mu_D$ is rotated from $\mu_E$ by an angle $\theta$. Then, the eigenvalues of $Q$ follow as
\begin{equation}
    \lambda_{\rm min}
    =
    |a|_2^2 (1 - | \cos\theta |) ,
    \quad
    \lambda_{\rm max}
    =
    |a|_2^2 (1 + | \cos\theta |) ,
\end{equation}
with multiplicity $N$. Evidently, $\lambda_{\rm max} > 0$ for all $\theta$. In addition, $\lambda_{\rm min} > 0$, and $\hat L$ is integrable, if $\theta \not\in \pi\mathbb Z$, i.~e., if $\mu_D$ and $\mu_E$ are {\sl transverse}. Else, $\lambda_{\rm min} = 0$, and $\hat L$ is not integrable, if $\theta \in\pi\mathbb Z$, i.~e., if $\mu_D$ and $\mu_E$ are aligned.

We finally check that this example fulfills the assumptions of Theorem~\ref{thm:concentration-Lebesgue}. We already checked conditions (i) and (ii). In order to prove condition (iii), we choose $\lambda_0=1$, $\beta_0=(|a|_2^2+|b|_2^2)/4$, and observe that
\begin{equation}
\begin{split}
    \bar w_{\beta_0}(\eta)
    L(S_\lambda(\xi,\eta))
    = &
    B_{\beta_0}^{-1} {\rm e}^{-2\beta_0|\eta|_{2N}^2}
    \\ & \times 
    {\rm e}^{-\frac{1}{4} 
    |a_1 \xi_1 + a_2 \xi_2 + \frac{a_1\eta_1+a_2\eta_2}{\lambda}|_N^2
     -
     \frac{1}{4} |b_1 \xi_1 + b_2 \xi_2 -\frac{b_1\eta_1+b_2\eta_2}{\lambda}|_N^2} .
\end{split}
\end{equation}
Using that for any $x,y\in\mathbb R^N$ one has $|x+y|_N^2\ge \frac12 |x|_N^2-|y|_N^2$, we estimate
\begin{equation}
    \frac{1}{4} 
    |a_1 \xi_1 + a_2 \xi_2 + \frac{a_1\eta_1+a_2\eta_2}{\lambda}|_N^2
    \ge 
    \frac18 
    |a_1\xi_1+a_2\xi_2|_N^2-\frac1{4\lambda^2} |a|_2^2|\eta|_{2N}^2
\end{equation}
and the same for the second term. Using  $\lambda^2\ge 1$, this implies
\begin{equation}
\begin{split}
    \bar w_{\beta_0}(\eta)
    L(S_\lambda(\xi,\eta))
    \le &
    B_{\beta_0}^{-1} 
    {\rm e}^{-\beta_0|\eta|_{2N}^2-\frac{1}{8} |a_1 \xi_1 + a_2 \xi_2|_N^2
    -
    \frac{1}{8} |b_1 \xi_1 + b_2 \xi_2|_N^2} 
    =: 
    g(\xi,\eta) .
\end{split}
\end{equation}
If the vectors $a$ and $b$ are linearly independent (as elements of $\mathbb R^2$) the right-hand side is integrable and (iii) holds.
}\hfill$\square$
\end{example}

Applying $\mu_\beta \rightharpoonup \mu_\infty$ in $\mathcal{M}_b(Z\times Z)$ to the test function $f\equiv 1$ we see that the total variation passes to the limit.
\begin{corollary}\label{Ak5zZ5}
Under the assumptions of Theorem~\ref{thm:concentration-Lebesgue}, we have
\begin{equation}
    \lim_{\beta\to+\infty} |\mu_\beta| = |\mu_\infty|.
\end{equation}
\end{corollary}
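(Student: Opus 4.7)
The proof is essentially immediate from Theorem~\ref{thm:concentration-Lebesgue}, which delivers not merely weak convergence against $C_c(Z\times Z)$ but genuine convergence in $\mathcal{M}_b(Z\times Z)$ in the sense of \eqref{eqdefweakProb}, i.e., tested against all $f \in C_b(Z\times Z)$. The plan is simply to apply this convergence to the constant test function $f \equiv 1$, which lies in $C_b(Z\times Z)$ but crucially not in $C_c(Z\times Z)$. It is precisely this stronger form of convergence that is needed here.

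Concretely, I would first observe that $\mu_\beta$ is a non-negative Radon measure, since the density $L$ in hypothesis (i) of Theorem~\ref{thm:concentration-Lebesgue} is non-negative and the Gaussian weight $w_\beta$ in \eqref{v4coYU} is strictly positive. Hence its weak limit $\mu_\infty$ is also non-negative, and for both of these non-negative bounded measures the total variation coincides with the total mass:
\begin{equation*}
    |\mu_\beta| = \mu_\beta(Z\times Z) = \int_{Z\times Z} 1 \, d\mu_\beta, \qquad
    |\mu_\infty| = \mu_\infty(Z\times Z) = \int_{Z\times Z} 1 \, d\mu_\infty.
\end{equation*}
Specializing the conclusion of Theorem~\ref{thm:concentration-Lebesgue} to $f \equiv 1$ then yields $|\mu_\beta| \to |\mu_\infty|$ as $\beta \to +\infty$.

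There is no substantive obstacle in this argument; the only conceptual point worth flagging is that weak convergence against $C_c$ alone would not suffice, since mass can escape to infinity under $C_c$-convergence. The stronger $C_b$-convergence established inside the proof of Theorem~\ref{thm:concentration-Lebesgue} (via the dominated convergence argument based on the equi-integrability hypothesis (iii)) precludes exactly this phenomenon, which is why the total variation passes to the limit.
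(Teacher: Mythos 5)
Your proof is correct and is exactly the paper's argument: the corollary is obtained by applying the $C_b$-convergence asserted in Theorem~\ref{thm:concentration-Lebesgue} to the test function $f\equiv 1$, using that $\mu_\beta$ and $\mu_\infty$ are non-negative so that total variation equals total mass. The remark that $C_c$-convergence alone would not suffice is a correct and worthwhile observation, but the substance of the proof matches the paper's.
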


From Theorem~\ref{thm:concentration-Lebesgue} and Corollary~\ref{Ak5zZ5} we immediately obtain convergence of expectations, i.~e.,
\begin{equation}\label{hqP8wE}
    \mathbb{E}_\infty[f]
    =
    \frac{\mu_\infty(f)}{|\mu_\infty|}
    =
    \lim_{\beta\to+\infty}
    \frac{\mu_\beta(f)}{|\mu_\beta|}
    =
    \lim_{\beta\to+\infty}
    \mathbb{E}_\beta[f] ,
\end{equation}
for every function $f \in C_b(Z\times Z)$.

\subsection{Deterministic Loading}\label{nXr8mw}
The case of deterministic loading is also of interest, making the link to the deterministic DD formulation discussed in Section~\ref{sec:det}.

\begin{theorem}\label{thm:det-Leb}
Let $Z = \mathbb{R}^{2N}$, $E$ an affine subspace of $Z$ of dimension $N$, $E_0$ the translate of $E$ through the origin, $E_0^\perp$ the orthogonal complement of $E_0$ and $\mu \in \mathcal{M}(Z\times Z)$. Assume:
\begin{itemize}
    \item[i)] (Regularity) $\mu = (L_D \, \mathcal{L}^{2N}) \times (\mathcal{H}^{N}\LL E)$, $L_D \in L^1_{\rm loc}(Z)$ non-negative.
    \item[ii)] (Continuity) The pointwise limit
\begin{equation}
    L_D(\xi)
    =
    \lim_{\lambda\to+\infty}
    L_D\Big(\xi+\frac{\eta}{\lambda}\Big) ,
\end{equation}
holds for $\mathcal H^N$-a.~e.~$\xi\in E$ and $\mathcal H^N$-a.~e.~$\eta \in E_0^\perp$.
    \item[iii)] (Equi-integrability) There is a function $g \in L^1(E\times E_0^\perp\times E_0; \mathcal{H}^{3N})$, $\lambda_0>0$ and $\beta_0 > 0$ such that
\begin{equation}
    {\rm e}^{-\beta_0 (\|\eta\|^2 + \|\zeta\|^2)}
    L_D\Big(\xi+\frac{\eta}{\lambda}\Big)
    \leq
    g(\xi,\eta,\zeta)
\end{equation}
for all $\lambda \ge \lambda_0$.
\end{itemize}
Then, the weak limit $\mu_\infty$ of $\mu_\beta$ as $\beta \to +\infty$ exists in $\mathcal{M}_b(Z\times Z)$ (in the sense of \eqref{eqdefweakProb}) and
\begin{equation}\label{AjC0tV}
    \mu_\infty(f)
    =
    \int_E
        f(\xi,\xi) \, L_D(\xi)
    \, d\mathcal H^N(\xi) ,
\end{equation}
for all $f \in C_b(Z\times Z)$.
\end{theorem}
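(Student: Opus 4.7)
The plan is to mirror the proof of Theorem~\ref{thm:concentration-Lebesgue}, but with a change of variables tailored to the fact that $\mu_E = \mathcal{H}^N \LL E$ is concentrated on the $N$-dimensional affine subspace $E$. I would begin by writing the integral form
\begin{equation*}
    \mu_\beta(f)
    =
    \int_E \int_Z
        f(y,z)\, B_\beta^{-1} {\rm e}^{-\beta\|y-z\|^2} L_D(y)
    \, dy\, d\mathcal{H}^N(z).
\end{equation*}
The first step is to split, for each fixed $z\in E$, the translate $y-z \in Z = E_0 \oplus E_0^\perp$ as $y - z = \zeta + \eta$ with $\zeta \in E_0$ and $\eta \in E_0^\perp$, so that by Fubini $dy = d\mathcal{H}^N(\zeta)\, d\mathcal{H}^N(\eta)$ and $\|y-z\|^2 = \|\zeta\|^2 + \|\eta\|^2$. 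The second (and crucial) step is the reparametrization $\xi := z + \zeta$. Since $\zeta \in E_0$ acts by translation on the affine space $E$, the map $z \mapsto \xi$ (for fixed $\zeta$) is an $\mathcal{H}^N\LL E$-preserving bijection, and the evaluation of the material density becomes $L_D(y) = L_D(\xi + \eta)$, which is exactly the form to which hypothesis (ii) applies.

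Next I would rescale $\tilde\zeta := \lambda\zeta$, $\tilde\eta := \lambda\eta$ with $\lambda := \sqrt{\beta/\beta_0}$, and invoke the scaling relation \eqref{eqBbetascal} to check that the Jacobian factor $\lambda^{-2N}$ exactly cancels $B_\beta^{-1}$ up to $B_{\beta_0}^{-1}$, yielding
\begin{equation*}
    \mu_\beta(f)
    =
    B_{\beta_0}^{-1}
    \int_E \int_{E_0} \int_{E_0^\perp}
        f\bigl(\xi + \tfrac{\tilde\eta}{\lambda},\, \xi - \tfrac{\tilde\zeta}{\lambda}\bigr)
        {\rm e}^{-\beta_0(\|\tilde\zeta\|^2+\|\tilde\eta\|^2)}
        L_D\bigl(\xi+\tfrac{\tilde\eta}{\lambda}\bigr)
    \, d\mathcal{H}^N(\tilde\eta)\, d\mathcal{H}^N(\tilde\zeta)\, d\mathcal{H}^N(\xi).
\end{equation*}
For $\beta \ge \beta_* := \beta_0 \lambda_0^2$ the integrand is bounded by $\|f\|_\infty\, g(\xi,\tilde\eta,\tilde\zeta)$, which by (iii) is integrable; this simultaneously verifies the boundedness condition (i) of Definition~\ref{def:diag} and supplies a dominant for the dominated convergence theorem.

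By continuity of $f$ and hypothesis (ii), the integrand converges pointwise $\mathcal{H}^{3N}$-a.e.\ on $E \times E_0 \times E_0^\perp$ to $f(\xi,\xi)\, {\rm e}^{-\beta_0(\|\tilde\zeta\|^2+\|\tilde\eta\|^2)} L_D(\xi)$. Passing to the limit $\beta\to+\infty$ and applying Fubini to separate the $\xi$-integral from the Gaussian integral over $E_0 \oplus E_0^\perp = Z$ yields $\int_Z {\rm e}^{-\beta_0\|h\|^2}\, dh = B_{\beta_0}$, which cancels the prefactor $B_{\beta_0}^{-1}$ and produces the claimed expression \eqref{AjC0tV}.

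The main obstacle, and the key conceptual content of the proof, is the reparametrization $\xi = z + \zeta$: without it, one would encounter $L_D(z + \zeta + \eta)$ with $\zeta \in E_0$ and would need a continuity assumption on $L_D$ in the tangent direction $E_0$, which is \emph{not} supplied by (ii). The substitution exploits the group structure of the affine space $E$ under $E_0$-translations to reduce all perturbations of $L_D$ to the orthogonal direction $E_0^\perp$. A secondary technical point is to ensure that $f \in C_b(Z\times Z)$, rather than $C_c$, is admissible; this is handled by the $L^1$ domination from (iii) exactly as in the proof of Theorem~\ref{thm:concentration-Lebesgue}.
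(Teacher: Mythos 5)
Your proof is correct and follows essentially the same route as the paper: the paper also writes $y=\xi+\eta$ with $\xi=P_Ey\in E$, $\eta\in E_0^\perp$ and $z=\xi-\zeta$ with $\zeta\in E_0$ (which is exactly your substitution $\xi=z+\zeta$ in different notation), rescales $\eta,\zeta$ by $\lambda=\sqrt{\beta/\beta_0}$ so that $B_\beta^{-1}\lambda^{-2N}=B_{\beta_0}^{-1}$, and concludes by dominated convergence and Fubini, using (iii) both as the dominant and to pass from $C_c$ to $C_b$ test functions. Your remark that the change of variables is designed so that $L_D$ is only evaluated at $\xi+\eta$ with $\eta\in E_0^\perp$, matching hypothesis (ii), accurately identifies the same key point exploited in the paper's argument.
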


In other words, we find that $\mu_\infty=L_D \mathcal{H}^{N}\LL (E,E)$, 
where $(E,E):=\{(z,z): z\in E\}$,
also see Fig.~\ref{lRkk0p}a. Note that the same argument can be applied to measures of the form $\mu =  (\mathcal{H}^{N}\LL D)\times (L_E \, \mathcal{L}^{2N})$.

\begin{proof}
By (i), we have, for every $f \in C_c(Z\times Z)$,
\begin{equation}\label{YuF3fv}
    \mu_\beta(f)
    =
    \int_{Z\times E}
        f(y,z)
        B_\beta^{-1}
        {\rm e}^{-\beta \|y-z\|^2}
        L_D(y)
    \, dy \, d\mathcal{H}^{N}(z) .
\end{equation}
With the aid of the orthogonal projection $P_E$ from $Z$ onto $E$, (\ref{YuF3fv}) can be rewritten as
\begin{equation}\label{t1yTvL}
    \mu_\beta(f)
    =
    \int_{Z\times E}
        f(y,z)
        B_\beta^{-1}
        {\rm e}^{-\beta \|y-P_E y\|^2}
        {\rm e}^{-\beta \|P_E y-z\|^2}
        L_D(y)
    \, dy \, d\mathcal{H}^{N}(z).
\end{equation}
Every $y\in Z$ can be uniquely represented as $y = \xi + \eta$, $\xi=P_Ey \in E$, $\eta=y-P_Ey \in E_0^\perp$. Once this is fixed, any $z\in E$ can be uniquely represented as $z = \xi - \zeta$ for a unique $\zeta \in E_0$. With this change of variables, (\ref{t1yTvL}) becomes
\begin{equation}\label{BO3pMY}
\begin{split}
    \mu_\beta(f)
    =
    \int_{E \times E_0^\perp \times E_0}
        f(\xi + \eta, \xi - \zeta)&
        B_\beta^{-1}
        {\rm e}^{-\beta \|\eta\|^2}
        {\rm e}^{-\beta \|\zeta\|^2}\\
        &\times
        L_D(\xi + \eta)
    \, d\mathcal H^N(\xi) \, d\mathcal H^N(\eta) \, d\mathcal H^N(\zeta).
    \end{split}
\end{equation}
Let $\beta_0$ be as in (iii), and $\lambda=\sqrt{\beta/\beta_0}$. Scaling $\eta$ and $\zeta$ by $\lambda$ we have
\begin{equation}\label{rrHL3A}
\begin{split}
    \mu_\beta(f)
    =
    \int_{E \times E_0^\perp \times E_0}
        f(\xi + \frac{\eta}{\lambda}, \xi - \frac{\zeta}{\lambda})&
        B_{\beta_0}^{-1}
        {\rm e}^{-\beta_0 \|\eta\|^2}
        {\rm e}^{-\beta_0 \|\zeta\|^2}
        \\ & \times        
        L_D(\xi + \frac{\eta}{\lambda})
    \, d\mathcal H^N(\xi) 
    \, d\mathcal H^N(\eta) 
    \, d\mathcal H^N(\zeta).
\end{split}
\end{equation}
By (iii), if $\lambda\ge \lambda_0$ (which is the same as $\beta\ge \beta_* := \beta_0 \lambda_0^2$) then $\mu_\beta\in\mathcal M_b(Z\times Z)$ and the same representation holds for every $f\in C_b(Z\times Z)$.

In view of (ii-iii), we can pass to the limit by Lebesgue dominated convergence, with the result
\begin{equation}
\begin{split}
    \lim_{\beta\to+\infty} \mu_\beta(f)
    =
    \int_{E \times E_0^\perp \times E_0}
        f(\xi, \xi)&
        B_{\beta_0}^{-1}
        {\rm e}^{-\beta_0 \|\eta\|^2}
        {\rm e}^{-\beta_0 \|\zeta\|^2}
        \\ & \times
        L_D(\xi)
    \, d\mathcal H^N(\xi) \, d\mathcal H^N(\eta) \, d\mathcal H^N(\zeta).
    \end{split}
\end{equation}
By (iii) and Fubini's theorem, an evaluation of the integrals with respect to $\eta$ and $\zeta$ finally gives
\begin{equation}
    \lim_{\beta\to+\infty} \mu_\beta(f)
    =
    \int_E
        f(\xi, \xi)
        L_D(\xi)
    \, d\mathcal H^N(\xi)
    =:
    \mu_\infty(f) .
\end{equation}
\end{proof}

Again, applying Theorem~\ref{thm:det-Leb} to the test function $f\equiv 1$, the total variation passes to the limit.

\begin{corollary}\label{axEQ2m}
Under the assumptions of Theorem~\ref{thm:det-Leb}, we have
\begin{equation}
    \lim_{\beta\to+\infty} |\mu_\beta| = |\mu_\infty|.
\end{equation}
\end{corollary}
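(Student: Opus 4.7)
The plan is essentially a one-line application of Theorem~\ref{thm:det-Leb}. The conclusion of that theorem says $\mu_\beta(f) \to \mu_\infty(f)$ for every $f \in C_b(Z \times Z)$, and the constant function $f \equiv 1$ lies in $C_b(Z \times Z)$. Since $\mu_\beta(1) = \mu_\beta(Z\times Z) = |\mu_\beta|$ and likewise $\mu_\infty(1) = |\mu_\infty|$, the desired convergence follows immediately.

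The only point that needs a brief verification is that both sides are actually finite, so that taking $f \equiv 1$ is legitimate. For this I would recall that Theorem~\ref{thm:det-Leb} already establishes $\mu_\infty \in \mathcal{M}_b(Z\times Z)$, and that hypothesis (iii) of that theorem together with the representation \eqref{rrHL3A} derived in its proof shows $|\mu_\beta| < \infty$ whenever $\beta \ge \beta_* = \beta_0 \lambda_0^2$. In particular, $\mu_\beta$ and $\mu_\infty$ are bounded Radon measures for all sufficiently large $\beta$, so the weak convergence in the sense of \eqref{eqdefweakProb} applies and testing against $f \equiv 1$ is valid.

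There is no real obstacle: the work was already done inside the proof of Theorem~\ref{thm:det-Leb}, where the equi-integrable dominating function $g$ furnished by hypothesis (iii) allowed the dominated convergence argument to handle every bounded continuous test function, not merely compactly supported ones. The corollary is thus just the specialization to $f \equiv 1$, and one sentence of exposition suffices.
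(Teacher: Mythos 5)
Your proposal is correct and is exactly the paper's argument: the corollary is obtained by testing the weak convergence $\mu_\beta \rightharpoonup \mu_\infty$ in $\mathcal{M}_b(Z\times Z)$ (in the sense of \eqref{eqdefweakProb}) against $f\equiv 1\in C_b(Z\times Z)$, with boundedness of $\mu_\beta$ for $\beta\ge\beta_*$ supplied by hypothesis (iii) inside the proof of Theorem~\ref{thm:det-Leb}. No gaps.
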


As before, from Theorem~\ref{thm:det-Leb} and Corollary~\ref{axEQ2m} we obtain convergence of expectations in the sense of (\ref{hqP8wE}).

\subsection{Connection with the Kullback-Leibler divergence}\label{tw7QP8}
The term `thermalization' referring to $\mu_\beta$ may be motivated and
justified by the following connection to the Kullback-Leibler divergence, which is
known as the relative entropy in the analysis of partial differential equations.
Consider
fixed choices of $\mu \in
\mathcal{M}(Z\times Z)$ and $\beta>0$ such that $\mu_\beta\in
\mathcal{M}_b(Z\times Z)$. Then define the functional
$G_\beta:\mathcal{M}_b(Z\times Z) \to (-\infty,+\infty]$ as the Kullback-Leibler
divergence of $\nu$ with respect to $\mu_\beta$,
\begin{align}\label{SWwR3G}
    G_\beta(\nu)
    & :=
    \int_{Z\times Z} 
        \log\Big(
             \dfrac{d\nu}{d\mu_\beta}
        \Big) \, 
        d\nu
    -
    |\nu|+|\mu_\beta|
\end{align}
if $\nu \ll \mu_\beta$ and $\log \Big( \dfrac{d\nu}{d\mu_\beta}\Big)\in L^1(Z\times Z, \nu)$, and $G_\beta(\nu)=+\infty$ otherwise. 
If also
$\log \Big( {\dfrac{d\nu}{d\mu}}\Big)$ is integrable,
the expression above can be rewritten as
\begin{align}
    \beta
    \int_{Z\times Z}
        \|y-z\|^2\, d\nu(y,z)
            +\int_{Z\times Z}
            \log
            \Big(
              \dfrac{d\nu}{d\mu}
            \Big)
    \, d\nu
     + 
     (\log B_\beta-1) |\nu|+|\mu_\beta|.
\end{align}
This combines the Kullback-Leibler divergence of $\nu$ with respect to $\mu$ with an energy term $\beta \|y-z\|^2$. We recall that the Donsker-Varadhan variational formula \cite{Donsker:1975a} gives the representation
\begin{equation}\label{rNmNGn}
    G_\beta(\nu)
    =|\mu_\beta|+
    \sup_{f\in C_b(Z\times Z)}
    \Big(
        \int_{Z\times Z} f(y,z) \, d\nu(y,z)
        -|\nu|\log
        \int_{Z\times Z} {\rm e}^{f(y,z)} \, d\mu_\beta(y,z)
    \Big) ,
\end{equation}
with the supremum taken alternatively over all bounded measurable functions.

\begin{remark}
The Kullback-Leibler divergence of $\nu$ with respect to $\mu$ is usually defined for bounded measures only. This notion can be extended to unbounded $\mu\in\mathcal M(Z\times Z)$ as long as there exists some measurable function $W:Z\times Z\to [0,\infty)$ such that $\int_{Z\times Z} e^{-W}\,d\mu$ and $\int_{Z\times Z} W\,d\nu$ are finite, see \cite[Chapter 3]{leonard2014some}. In other words, $G_0$ is well-defined on bounded Radon measures with finite second moment as long as $\mu_\beta\in\mathcal{M}_b(Z\times Z)$ for some $\beta>0$.
\end{remark}

The main properties of the functionals $G_\beta$ are collected in the following result.

\begin{proposition}\label{bJTpsZ}
Let $Z = \mathbb{R}^N \times \mathbb{R}^N$ and $\mu \subset \mathcal{M}(Z\times Z)$ and $\beta > 0$. Then
\begin{itemize}
    \item[i)] $G_\beta$ is weakly lower-semicontinuous in $\mathcal{M}_b(Z\times Z)$.
    \item[ii)] $G_\beta$ is weakly coercive in $\mathcal{M}_b(Z\times Z)$.
    \item[iii)] The thermalized measure $\mu_\beta$ is the unique minimizer of $G_\beta$.
\end{itemize}
\end{proposition}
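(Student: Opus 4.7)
The plan is to reduce all three properties to the identity
\begin{equation*}
G_\beta(\nu) = \int_{Z\times Z} h(\rho)\, d\mu_\beta, \qquad h(r) := r\log r - r + 1,
\end{equation*}
which holds whenever $\nu \ll \mu_\beta$ with density $\rho := d\nu/d\mu_\beta$, by substituting $d\nu = \rho\, d\mu_\beta$ in the definition \eqref{SWwR3G}. The function $h$ is non-negative, strictly convex on $[0,\infty)$, vanishes only at $r=1$, and has Fenchel conjugate $h^*(s) = e^s - 1$. When $\nu \not\ll \mu_\beta$, $G_\beta(\nu) = +\infty$ and there is nothing to check. Note also that $G_\beta(\mu_\beta) = 0$, so $\mu_\beta$ lies in the effective domain.

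For (i), I would invoke the Donsker-Varadhan formula \eqref{rNmNGn} to write
\begin{equation*}
G_\beta(\nu) - |\mu_\beta| = \sup_{f\in C_b(Z\times Z)} \Bigl( \nu(f) - |\nu|\log \mu_\beta(e^f) \Bigr).
\end{equation*}
Under the weak convergence of $\mathcal M_b(Z\times Z)$ against $C_b$ test functions, each summand inside the supremum is continuous in $\nu$: the linear part $\nu(f)$ is continuous by definition, $|\nu| = \nu(1)$ is continuous because $1 \in C_b$, and $\mu_\beta(e^f)$ is a fixed positive number (finite since $e^f \le e^{\|f\|_\infty}$ and $\mu_\beta$ is bounded). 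A supremum of weakly continuous functionals is weakly lower-semicontinuous.

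For (ii), I would prove boundedness and tightness of the sublevel sets $\{G_\beta \le C\}$ and conclude via Prokhorov. Fenchel-Young gives $\rho\phi \le h(\rho) + e^\phi - 1$ pointwise, and integrating against $\mu_\beta$ yields, for every bounded measurable $\phi$,
\begin{equation*}
\int \phi\, d\nu \ \le\ G_\beta(\nu) + \int (e^\phi - 1)\, d\mu_\beta.
\end{equation*}
Taking $\phi \equiv 1$ gives $|\nu| \le C + (e-1)|\mu_\beta|$, hence uniform boundedness. For tightness, $\mu_\beta$ is a bounded Radon measure on a Polish space, hence inner regular; given $\varepsilon>0$ I would first choose $t$ large so that $C/t < \varepsilon/2$, then a compact $K$ with $\mu_\beta(K^c) < t\varepsilon/(2(e^t-1))$, and apply the inequality with $\phi = t\chi_{K^c}$ (for which $\int(e^\phi-1)d\mu_\beta = (e^t-1)\mu_\beta(K^c)$) to obtain $\nu(K^c) \le (C + (e^t-1)\mu_\beta(K^c))/t < \varepsilon$ uniformly over the sublevel set.

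For (iii), strict convexity of $h$ carries over to strict convexity of $\nu\mapsto G_\beta(\nu)$ on its convex effective domain $\{\nu\ll\mu_\beta\}$, so a minimizer is unique. Since $h\ge 0$ with $h(r)=0 \Leftrightarrow r=1$, the identity of the first paragraph gives $G_\beta(\nu)\ge 0$ with equality precisely when $\rho\equiv 1$ $\mu_\beta$-a.e., i.e.\ $\nu=\mu_\beta$; this both identifies $\mu_\beta$ as the unique minimizer and shows it is attained. The main obstacle I anticipate is bookkeeping in the Young/Donsker-Varadhan step: the classical formulations are for probability measures, and here one must track the $|\nu|$ correction, the finiteness of $\mu_\beta$, and the fact that $\mathcal M_b$ is endowed with the $C_b$-narrow topology rather than the $C_c$-vague one used elsewhere in the paper.
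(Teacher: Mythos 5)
Your proof is correct and follows essentially the same route as the paper's: the Donsker--Varadhan representation for lower semicontinuity, coercivity of the Kullback--Leibler divergence for (ii), and strict convexity of $h(r)=r\log r-r+1$ together with the identity $G_\beta(\nu)=\int h(d\nu/d\mu_\beta)\,d\mu_\beta$ for (iii). The only differences are that you spell out the coercivity argument (Fenchel--Young, uniform bound on $|\nu|$, tightness, Prokhorov) which the paper dispatches in one line, and that you identify the minimizer via $h\ge 0$ with equality only at $r=1$ where the paper appeals to Jensen's inequality.
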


\begin{proof}

i) follows directly from the representation \eqref{rNmNGn}.

ii) The weak coercivity of $G_\beta$ in $\mathcal{M}(Z\times Z)$ follows directly from the weak coercivity of the Kullback-Leibler divergence.

iii) Consider the function $h:[0,\infty)\to\mathbb{R}$ defined by $h(x)=x\log(x)-x+1$ if $x>0$ and $h(0)=1$.  Then $G_\beta(\nu)=\int_{Z\times Z}h(d\nu/d\mu_\beta)\,d\mu_\beta$. Uniqueness of minimizers follows from strict convexity of $G_\beta$, which is a consequence of $h$ being strictly convex. As $h$ has a unique minimum at $x=1$, by Jensen's inequality, the unique minimizer satisfies $h(d\nu/d\mu_\beta)=1$ $\mu_\beta$-almost everywhere, which, since $\nu\ll\mu_\beta$, is the same as $\nu=\mu_\beta$.
\end{proof}

Observation iii) in Proposition~\ref{bJTpsZ} provides an interpretation of $\mu_\beta$ as an entropy regularized distribution, where the effect of $\beta$ appears via the entropy spreading out the distribution $\mu_\beta$. 

\section{Approximation}
\label{secapproximation}
Suppose now that the material behavior, geometry and loading are not known exactly, but only approximately through a sequence $(\mu_h)$ of measures converging to $\mu$ in $\mathcal{M}(Z\times Z)$ in some appropriate sense. It would then be natural to seek conditions under which the corresponding diagonal concentrations $(\mu_{h,\infty})$ converge to $\mu_\infty$ weakly in $\mathcal{M}_b(Z\times Z)$. However, a first challenge that impedes this program is that the diagonal concentrations $(\mu_{h,\infty})$ may not exist or be degenerate. For instance, suppose that the approximating measures are discrete and of the form
\begin{equation}\label{rPkN79}
    \mu_h
    =
    \sum_{i=1}^\infty
        c_{h,i} \delta_{(y_{h,i},z_{h,i})} ,
    \qquad
    c_{h,i} > 0,
    \qquad
    (y_{h,i},z_{h,i}) \in Z\times Z ,
\end{equation}
e.~g., resulting from empirical observation. In this case, the diagonal concentrations $(\mu_{h,\infty})$ are indeed likely to vanish generically. We overcome this difficulty by thermalizing the approximating measures $(\mu_h)$ in order to define an intermediate sequence $(\mu_{h,\beta_h})$, with $\beta_h\to+\infty$. By carefully choosing the quenching sequence $(\beta_h)$, we may expect to achieve the desired limit
\begin{equation}\label{gB6bTv}
    \mu_\infty = \mathop{{w}{-}\lim}_{h\to \infty} \mu_{h,\beta_h} ,
\end{equation}
in $\mathcal{M}_b(Z\times Z)$ in the sense of \eqref{eqdefweakProb}.

In this section, we endeavor to ascertain conditions under which the limit (\ref{gB6bTv}) is indeed attained. We begin by noting that (\ref{gB6bTv}) follows if
\begin{equation}\label{ysNW5O}
    \mu_\infty = \mathop{{w}{-}\lim}_{h\to \infty} \mu_{\beta_h} ,
\end{equation}
and, simultaneously,
\begin{equation}\label{x52Tto}
    \mathop{{w}{-}\lim}_{h\to \infty} \, (\mu_{h,\beta_h} - \mu_{\beta_h})
    =
    0 ,
\end{equation}
in $\mathcal{M}_b(Z\times Z)$. Eq.~(\ref{ysNW5O}) expresses the thermalization limit analyzed in Section~\ref{DOsT8G}. Conditions ensuring the convergence are provided by Theorems~\ref{thm:concentration-Lebesgue} and \ref{thm:det-Leb}. In the remainder of this section, we therefore turn attention to limit (\ref{x52Tto}).

\subsection{Random loading}\label{iw71LU}

We expect convergence to place restrictions on the rate at which the quenching schedule $(\beta_h)$ diverges to infinity. The following theorem puts forth conditions ensuring convergence in the case of measures $\mu$ absolutely continuous with respect to the Lebesgue measure.

\begin{theorem}\label{thm:randload}
Let $Z = \mathbb{R}^{2N}$ and $\mu \in \mathcal{M}(Z\times Z)$ and suppose that the assumptions of Theorem~\ref{thm:concentration-Lebesgue} hold. Let $(\mu_h)$ be a sequence of measures in $\mathcal{M}(Z\times Z)$. Assume, additionally, that:
\begin{itemize}
\item[iv)] For every $h \in \mathbb{N}$, there is a Borel transport map $T_h : Z\times Z \to Z\times Z$ such that
\begin{equation}\label{qMvw6F}
    \mu_h = T_h \# \mu .
\end{equation}
\item[v)] There is a sequence $(\beta_h)$ of positive numbers diverging to $+\infty$ such that, setting $\lambda_h:=\sqrt{\beta_h/\beta_0}$
with $\beta_0$ as in (iii), for every $(\xi,\eta) \in Z\times Z$ one has
\begin{equation}\label{gc0XtJ}
    \lim_{h\to\infty} T_h\circ S_{\lambda_h}(\xi,\eta)
    =\left(\frac\xi{\sqrt2},\frac\xi{\sqrt2}\right)
\end{equation}
and
\begin{equation}\label{eqconveta}
    \lim_{h\to\infty} \eta'_h(\xi,\eta)=\eta
\end{equation}
where we write
\begin{equation}\label{RDo9S0}
    (\xi'_h(\xi,\eta),\eta'_h(\xi,\eta)) := S_{\lambda_h}^{-1} \circ T_h \circ S_{\lambda_h}(\xi,\eta) .
\end{equation}
\item[vi)] There is a function $g\in L^1(Z\times Z)$ and $h_0\in\mathbb N$ such that for every $h\ge h_0$  and every $(\xi,\eta)\in Z\times Z$
\begin{equation}\label{aOI3Wq}
        \left[\bar w_{\beta_0}(\eta) +\bar w_{\beta_0}(\eta'_h(\xi,\eta))
 \right] L(S_{\lambda_h}(\xi,\eta))
        \le
        \, g(\xi,\eta).
\end{equation}
\end{itemize}
Then,
\begin{equation}
    \mathop{{w}{-}\lim}_{h\to\infty}
    \, (\mu_{h,\beta_h} - \mu_{\beta_h})
    =
    0 
\end{equation}
in $\mathcal M_b(Z\times Z)$ (in the sense of \eqref{eqdefweakProb}).
\end{theorem}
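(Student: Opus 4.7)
The plan is to express both $\mu_{h,\beta_h}(f)$ and $\mu_{\beta_h}(f)$ for $f\in C_b(Z\times Z)$ as integrals over $Z\times Z$ with respect to Lebesgue measure via the change of variables $(y,z)=S_{\lambda_h}(\xi,\eta)$ with $\lambda_h=\sqrt{\beta_h/\beta_0}$, and then show that the integrand of the difference converges pointwise a.e.~to zero with an integrable majorant. For $\mu_{\beta_h}$, the identity (\ref{fR4kzk}) established during the proof of Theorem~\ref{thm:concentration-Lebesgue} already gives
\begin{equation*}
    \mu_{\beta_h}(f) = \int_{Z\times Z} f(S_{\lambda_h}(\xi,\eta))\,\bar w_{\beta_0}(\eta)\,L(S_{\lambda_h}(\xi,\eta))\,d\xi\,d\eta .
\end{equation*}
For $\mu_{h,\beta_h}(f)$, I would first use (iv) to rewrite it as an integral against $\mu$, then apply (i) and the same change of variables, obtaining an integrand involving $f(T_h\circ S_{\lambda_h}(\xi,\eta))$, $w_{\beta_h}(T_h\circ S_{\lambda_h}(\xi,\eta))$ and $L(S_{\lambda_h}(\xi,\eta))$, together with the Jacobian factor $\lambda_h^{-2N}$.

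The key algebraic step is to re-express $w_{\beta_h}$ at the transported point in terms of $\bar w_{\beta_0}$. By the definition (\ref{RDo9S0}), one has $T_h\circ S_{\lambda_h}(\xi,\eta)=S_{\lambda_h}(\xi'_h(\xi,\eta),\eta'_h(\xi,\eta))$, so writing out the two components of $S_{\lambda_h}$ gives $\|y-z\|^2=2\|\eta'_h\|^2/\lambda_h^2=2\beta_0\|\eta'_h\|^2/\beta_h$. Combined with the scaling relation (\ref{eqBbetascal}), this yields $w_{\beta_h}(T_h\circ S_{\lambda_h}(\xi,\eta))\,\lambda_h^{-2N}=\bar w_{\beta_0}(\eta'_h(\xi,\eta))$, and hence
\begin{equation*}
    \mu_{h,\beta_h}(f)=\int_{Z\times Z} f(T_h\circ S_{\lambda_h}(\xi,\eta))\,\bar w_{\beta_0}(\eta'_h(\xi,\eta))\,L(S_{\lambda_h}(\xi,\eta))\,d\xi\,d\eta .
\end{equation*}

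With these two representations, the difference $(\mu_{h,\beta_h}-\mu_{\beta_h})(f)$ is the integral over $Z\times Z$ of
\begin{equation*}
    \Bigl[f(T_h\circ S_{\lambda_h}(\xi,\eta))\,\bar w_{\beta_0}(\eta'_h(\xi,\eta))\;-\;f(S_{\lambda_h}(\xi,\eta))\,\bar w_{\beta_0}(\eta)\Bigr]\,L(S_{\lambda_h}(\xi,\eta)).
\end{equation*}
For fixed $(\xi,\eta)$, the hypothesis (\ref{gc0XtJ}) in (v) together with the obvious $S_{\lambda_h}(\xi,\eta)\to(\xi/\sqrt 2,\xi/\sqrt 2)$ and continuity of $f$ ensure that both values of $f$ tend to the same limit $f(\xi/\sqrt 2,\xi/\sqrt 2)$; meanwhile (\ref{eqconveta}) and continuity of $\bar w_{\beta_0}$ give $\bar w_{\beta_0}(\eta'_h)\to\bar w_{\beta_0}(\eta)$. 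Thus the bracket tends to zero a.e., and multiplication by $L(S_{\lambda_h}(\xi,\eta))$ (which by (ii) has an a.e.~limit) preserves this. The dominating bound needed to invoke Lebesgue's dominated convergence theorem is exactly what (vi) supplies: for $h\ge h_0$ the modulus of the integrand is controlled by $\|f\|_\infty\,g(\xi,\eta)\in L^1(Z\times Z)$.

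The main obstacle is the bookkeeping behind the identity $w_{\beta_h}\circ T_h\circ S_{\lambda_h}\cdot\lambda_h^{-2N}=\bar w_{\beta_0}\circ\eta'_h$; once this is in hand the rest is routine dominated convergence, and the reason hypothesis (vi) is phrased as the sum $\bar w_{\beta_0}(\eta)+\bar w_{\beta_0}(\eta'_h)$ becomes transparent, since it must simultaneously dominate both terms of the bracket uniformly in $h$. Note also that (vi) with $f\equiv 1$ establishes $\mu_{h,\beta_h}\in\mathcal M_b(Z\times Z)$ for $h\ge h_0$, so the convergence statement takes place in the correct space.
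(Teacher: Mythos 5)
Your proposal is correct and follows essentially the same route as the paper's proof: the change of variables $S_{\lambda_h}$, the identity $\lambda_h^{-2N}\,w_{\beta_h}(S_{\lambda_h}(\xi_h',\eta_h'))=\bar w_{\beta_0}(\eta_h')$ combined with $T_h\circ S_{\lambda_h}=S_{\lambda_h}(\xi_h',\eta_h')$, pointwise convergence of the bracket via (v) and continuity of $f$ and $\bar w_{\beta_0}$, and dominated convergence using the majorant $\|f\|_\infty\,g$ from (vi). Your closing remark on using (vi) to pass from $C_c$ to $C_b$ (boundedness of $\mu_{h,\beta_h}$ and $\mu_{\beta_h}$) matches the paper's handling of that point as well.
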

We recall that \eqref{qMvw6F} means that $\mu_h(A)=\mu(T_h^{-1}(A))$ for any Borel set $A\subset Z\times Z$ or, equivalently, $\mu_h(f)=\mu(f\circ T_h)$ for all $f\in C_c(Z\times Z)$. Also note that Assumption vi) above implies Assumption iii) from Theorem~\ref{thm:concentration-Lebesgue} along the sequence $\lambda_h$.

We illustrate with the following example how conditions \eqref{gc0XtJ} and  \eqref{eqconveta} can be verified in practice, and refer to Figure~\ref{fig1} for a geometrical interpretation.

\begin{figure}
\begin{center}
    \includegraphics[width=6cm]{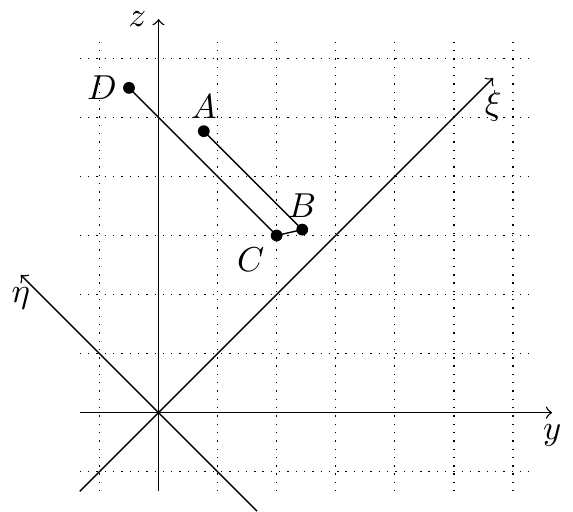}
\end{center}
    \caption{Sketch of the points appearing in \eqref{gc0XtJ}--\eqref{RDo9S0}. The point $A$ is $S_1(\xi,\eta)$. The mapping $S_\lambda\circ S_1^{-1}$ makes it closer to the diagonal, bringing it to $B:=S_\lambda(\xi,\eta)$. The map $T_h$ projects it onto the grid, leading to $C:=T_h(B)$. Finally, $D:=S_1(S_{\lambda^{-1}}(C))$ is again farther away from the diagonal (in this picture $\lambda=6$). Condition \eqref{gc0XtJ} states that $C$ is close to the diagonal for large $h$, condition \eqref{eqconveta} that $D$ is close to $A$, at least in the $\eta$-projection (which is the critical one for diagonal concentration}). \label{fig1}
\end{figure}

\begin{example}[Uniform grid]{\rm
Assume that $T_h$ is the projection onto $\delta_h\mathbb Z^{4N}$ (which is a componentwise operation), for some sequence $\delta_h\to0$. We denote by $p$ the projection of $\mathbb R^{2N}$ onto $\mathbb Z^{2N}$, defined componentwise by $p_i(y):=\lfloor y_i+\frac12\rfloor$, so that $T_h(y,z)=(\delta_h p(\delta_h^{-1}y), \delta_h p(\delta_h^{-1}z))$. Then
\begin{equation}
    T_h(S_{\lambda_h}(\xi,\eta))=(\delta_h
    p(\delta_h^{-1}\frac{\xi+\lambda_h^{-1}\eta}{\sqrt2}),
    \delta_hp( \delta_h^{-1}\frac{\xi-\lambda_h^{-1}\eta}{\sqrt2}))=:(y'_h,z'_h).
\end{equation}
From $\lambda_h\to\infty$ and $\delta_h\to0$ we obtain \eqref{gc0XtJ}. Further,
\begin{equation}
    \eta'_h(\xi,\eta)=\lambda_h\frac{y_h'-z_h'}{\sqrt2}
    =\frac{\lambda_h \delta_h}{\sqrt2}\left[
    p(\delta_h^{-1}\frac{\xi+\lambda_h^{-1}\eta}{\sqrt2})-p(\delta_h^{-1}
    \frac{\xi-\lambda_h^{-1}\eta}{\sqrt2})\right]
\end{equation}
so that  \eqref{eqconveta}  is equivalent to $\lambda_h\delta_h\to0$. This places a restriction on the rate at which the quenching schedule $\beta_h=\beta_0\lambda_h^2$  diverges depending on the rate of decay of the fineness of the discretization $\delta_h$.}
\end{example}

\begin{proof}[Proof of Theorem~\ref{thm:randload}]
Let $f\in C_c(Z\times Z)$. By iv), we have $\mu_{h,\beta_h}(f)=\mu_h(w_{\beta_h} f)=\mu( (w_{\beta_h} f)\circ T_h)$, so that
\begin{equation}
\begin{split}
    &
    \mu_{h,\beta_h}(f) - \mu_{\beta_h}(f)
    = \\ &
    \int_{Z\times Z}
        \Big(
            f(T_h(y,z)) \,  w_{\beta_h}(T_h(y,z))
            -
            f(y,z) \,  w_{\beta_h}(y,z)
        \Big)
        L(y,z)
    \, dy \, dz .
\end{split}
\end{equation}
Changing variables as in (\ref{hV5vEE}), we obtain
\begin{equation}
\begin{split}
    &
    \mu_{h,\beta_h}(f) - \mu_{\beta_h}(f)
    = \\ &
    \int_{Z\times Z}
        \Big(
            f(T_h\circ S_{\lambda_h}(\xi,\eta)) \,  w_{\beta_h}(T_h\circ S_{\lambda_h}(\xi,\eta))
            - \\ & \qquad\qquad\qquad\qquad
            f(S_{\lambda_h}(\xi,\eta)) \,  w_{\beta_h}(S_{\lambda_h}(\xi,\eta))
        \Big)
        L(S_{\lambda_h}(\xi,\eta))
    \, \lambda_h^{-2N} \, d\xi \, d\eta .
\end{split}
\end{equation}
Since $\lambda_h=\sqrt{\beta_h/\beta_0}$ with $\beta_0$ given by iii) in Theorem~\ref{thm:concentration-Lebesgue}, we have
\begin{equation}
    \lambda_h^{-2N}
    w_{\beta_h}(S_{\lambda_h}(\xi,\eta))
    =
    B_{\beta_0}^{-1} {\rm e}^{-2 \beta_0 \|\eta\|^2}
    =
    \bar w_{\beta_0}(\eta) ,
\end{equation}
and the same for {$(\xi,\eta)$ replaced with $(\xi_h',\eta_h')$}. We recall that \eqref{RDo9S0} implies $T_h\circ S_{\lambda_h}(\xi,\eta)=S_{\lambda_h}(\xi_h',\eta_h')$. Therefore
\begin{equation}\label{eqmuhbhfdiff}
\begin{split}
    &
    \mu_{h,\beta_h}(f) - \mu_{\beta_h}(f)
     \\ =&
    \int_{Z\times Z}
        \Big(
            f(T_h\circ S_{\lambda_h}(\xi,\eta)) \,
             \bar w_{\beta_0}(\eta_h')
            -
            f(S_{\lambda_h}(\xi,\eta)) \, \bar w_{\beta_0}(\eta)
        \Big)
        L(S_{\lambda_h}(\xi,\eta))
    \, d\xi \, d\eta .
\end{split}
\end{equation}
Analogously
\begin{equation}
\begin{split}
    &
    \mu_{h,\beta_h}(f) + \mu_{\beta_h}(f)
     \\ =&
    \int_{Z\times Z}
        \Big(
            f(T_h\circ S_{\lambda_h}(\xi,\eta)) \,
             \bar w_{\beta_0}(\eta')
            +
            f(S_{\lambda_h}(\xi,\eta)) \, \bar w_{\beta_0}(\eta)
        \Big)
        L(S_{\lambda_h}(\xi,\eta))
    \, d\xi \, d\eta.
\end{split}
\end{equation}
By assumption vi), for $h\ge h_0$ the integrand is bounded by $(\sup |f|) g(\xi,\eta)$, therefore $\mu_{h,\beta_h}$ and $\mu_{\beta_h}$ are in $\mathcal M_b(Z\times Z)$ and these formulas hold for all $f\in C_b(Z\times Z)$.

To conclude the proof we need to show that \eqref{eqmuhbhfdiff} converges to zero. By assumption ii) from Theorem~\ref{thm:concentration-Lebesgue}, for almost every $(\xi,\eta)$ the sequence $L(S_{\lambda_h}(\xi,\eta))$ converges. By continuity of $\bar w_{\beta_0}$ and \eqref{eqconveta} in assumption v), $\bar w_{\beta_0}(\eta')\to \bar w_{\beta_0}(\eta)$. By continuity of $f$ and \eqref{gc0XtJ} in assumption v), both $f(T_h\circ S_{\lambda_h}(\xi,\eta))$ and $f(S_{\lambda_h}(\xi,\eta))$ converge to $f(\xi/\sqrt2,\xi/\sqrt2)$. Therefore, the integrand in the right-hand side converges pointwise to zero. Since $f$ and $\bar w_{\beta_0}$ are bounded, using vi) and dominated convergence, we obtain
\begin{equation}
\begin{split}
    \lim_{h\to0}
    \left(    \mu_{h,\beta_h}(f) - \mu_{\beta_h}(f)\right)=0
\end{split}
\end{equation}
which concludes the proof.
\end{proof}

We expect condition vi) of Theorem~\ref{thm:randload} to place restrictions on the rate at which the quenching schedule $(\beta_h)$ diverges depending on the rate of decay of the likelihood function away from ${\rm diag}(Z \times Z)$. The following example illustrates the convergence conditions (v-vi) of Theorem~\ref{thm:randload}.

\begin{example}[Shifting error]\label{ZYe7s4}{\rm
Suppose that $\mu = L \, \mathcal{L}^{2N} \times \mathcal{L}^{2N}$ satisfies the conditions of Theorem~\ref{thm:concentration-Lebesgue} and $\mu_h = L_h \, \mathcal{L}^{2N} \times \mathcal{L}^{2N}$ contains errors with respect to $L$ by a shift $(u_h,v_h) \in Z \times Z$, i.~e.,
\begin{equation}
    L_h (y,z)
    =
    L(y-u_h,z-v_h) .
\end{equation}
In this case,
\begin{equation}
    T_h(y,z) = (y,z) + (u_h,v_h) ,
\end{equation}
which translates $L$ by $(u_h,v_h)$. Condition  \eqref{gc0XtJ} in v) of Theorem~\ref{thm:randload} requires that
\begin{equation}
    \lim_{h\to\infty}
    T_h\circ S_{\lambda_h}(\xi,\eta)
    =
    \lim_{h\to\infty}
    \Big(
        \frac{\xi + \frac{\eta}{\lambda_h}}{\sqrt{2}} + u_h ,
        \frac{\xi - \frac{\eta}{\lambda_h}}{\sqrt{2}} + v_h
    \Big)
    =
    (\frac\xi{\sqrt 2},\frac\xi{\sqrt2}) ,
\end{equation}
and, therefore, that
\begin{equation}\label{equhvh0}
    \lim_{h\to\infty} (u_h, v_h) = (0,0) .
\end{equation}
In addition, we compute
\begin{equation}\label{varprime}
    (\xi_h',\eta_h')=    S_{\lambda_h}^{-1} \circ T_h \circ S_{\lambda_h}(\xi,\eta)
    =
    \Big(
        \xi  + \left(\frac{u_h+v_h}{\sqrt{2}}\right) ,
        \eta + \lambda_h \left(\frac{u_h-v_h}{\sqrt{2}}\right)
    \Big).
\end{equation}
Therefore condition \eqref{eqconveta} requires
\begin{equation}
    \lim_{h\to\infty}
    \eta_h'
    =
    \eta + \lim_{h\to\infty}
    \lambda_h
        \frac{u_h-v_h}{\sqrt{2}}=\eta
\end{equation}
or, equivalently, that
\begin{equation}\label{equhvh1}
    \lim_{h\to\infty} \lambda_h(u_h-v_h) = 0 .
\end{equation}
It remains to check the uniform integrability condition vi). Assume that the function $\hat g:Z\to[0,\infty]$,
\begin{equation}\label{eqdefgxi}
    \hat g(\xi)
    :=
    \sup_{\eta\in Z} L\left(\frac{\xi+\eta}{\sqrt 2},\frac{\xi-\eta}{\sqrt 2}\right)
\end{equation}
is integrable (also see Remark~\ref{rmk:int} below).
We obtain, writing briefly $\eta'$ for $\eta'_h(\xi,\eta)$,
\begin{equation}\label{eqwbeta0etaLs1}
    \left[\overline w_{\beta_0}(\eta)+
    \overline w_{\beta_0}(\eta') \right]
    L(S_\lambda(\xi,\eta))\le
    \left[\overline w_{\beta_0}(\eta)+
    \overline w_{\beta_0}(\eta') \right]
    \hat g(\xi).
\end{equation}
From \eqref{varprime}, we have
$$
\|\eta\|^2 = \|\eta'-\lambda_h\left(\frac{u_h-v_h}{\sqrt{2}}\right)\|^2
\le 2\|\eta'\|^2 + \|\lambda_h\left(u_h-v_h\right)\|^2\,,
$$
where by \eqref{equhvh1} the last term decreases to zero uniformly with $h$, and so can be bounded by 2.
and therefore $\overline w_{\beta_0}(\eta')\le B_{\beta_0}^{-1} {\rm e}^{-\beta_0 \|\eta\|^2+2\beta_0}$, which is integrable over $Z$. Then the function
\begin{equation}
    g(\xi,\eta)
    :=
    2B_{\beta_0}^{-1} {\rm e}^{-\beta_0 \|\eta\|^2+2\beta_0} \hat g(\xi)
\end{equation}
is integrable  over $Z\times Z$ and shows that condition (vi) of Theorem~\ref{thm:randload} is satisfied. Evidently, \eqref{equhvh1} places a restriction on the quenching schedule $(\beta_h)$, which should diverge to $+\infty$ slower than $\| u_h - v_h \|^{-2}$.
\hfill$\square$}
\end{example}

\begin{remark}\label{rmk:int}
We remark that integrability of $\hat g$ as defined in \eqref{eqdefgxi} is related to, but different from, integrability of $L$ (which corresponds to the fact that $\mu$ is a bounded measure). For example, $L_a(y,z)={\rm e}^{-\|y+z\|^2}$ is not integrable, but $\hat g_a(\xi)={\rm e}^{-2\|\xi\|^2}$ is integrable. More generally, if $L(S_1(\xi,\eta))=L_\xi(\xi)L_\eta(\eta)$, with $L_\xi$ integrable and nonzero, then $L$ is integrable if and only if $L_\eta$ is. On the other hand,
boundedness of $L_\eta$ suffices to ensure integrability of $\hat g$.
\end{remark}

\subsection{Approximation by discrete empirical measures}\label{yu04L1}

Suppose that the approximating measure $\mu_h$ is of the form (\ref{rPkN79}). In this case, for every $f \in C_b(Z\times Z)$, we have
\begin{equation}
    \mu_{h,\beta_h}(f)
    =
    \sum_{i=1}^\infty
        f(y_{h,i},z_{h,i})
        c_{h,i}
        B_{\beta_h}^{-1}
        {\rm e}^{- \beta_h \|y_{h,i} - z_{h,i}\|^2} .
\end{equation}
The corresponding total variation is
\begin{equation}
    |\mu_{h,\beta_h}|
    =
    \sum_{i=1}^\infty
        c_{h,i}
        B_{\beta_h}^{-1}
        {\rm e}^{- \beta_h \|y_{h,i} - z_{h,i}\|^2} ,
\end{equation}
and the approximate expectation follows as
\begin{equation}\label{w635Xy}
    \mathbb{E}_{h,\beta_h}[f]
    =
    \frac
    {
        \sum_{i=1}^\infty
            f(y_{h,i},z_{h,i})
            c_{h,i}
            B_{\beta_h}^{-1}
            {\rm e}^{- \beta_h \|y_{h,i} - z_{h,i}\|^2}
    }
    {
        \sum_{i=1}^\infty
            c_{h,i}
            B_{\beta_h}^{-1}
            {\rm e}^{- \beta_h \|y_{h,i} - z_{h,i}\|^2}
    } .
\end{equation}
It bears emphasis that these approximate expectations are explicit in the data and involve no intermediate modeling step.

Theorem~\ref{thm:randload} supplies sufficient conditions for the approximate expectations (\ref{w635Xy}) to converge in the sense
\begin{equation}
    \lim_{h\to\infty}
    \mathbb{E}_{h,\beta_h}[f]
    =
    \mathbb{E}_\infty[f] \qquad \forall f \in C_b(Z\times Z)\,.
\end{equation}
In order to verify the assumptions of Theorem~\ref{thm:randload}, we begin by noting that the discrete empirical measure (\ref{rPkN79}) can be expressed in the form (\ref{qMvw6F}) by introducing a Borel transport map taking discrete values $T_h:Z\times Z\to D_h$, with $D_h = \{(y_{h,i},z_{h,i})\}_{i=1}^\infty$, and setting
\begin{equation}
    A_{h,i} := T_h^{-1}(y_{h,i},z_{h,i}) ,
    \qquad
    c_{h,i} := \mu_h(A_{h,i})=\int_{A_{h,i}} L(y,z) dydz .
\end{equation}
We assume that the sets $(A_{h,i})_{i\in\mathbb N}$ are Borel and constitute a partition of $Z\times Z$. We also assume that the approximation becomes asymptotically finer, in a sense that will be made precise below in \eqref{eqlambdahdeltah}, and that the limiting measure $\mu=L\mathcal L^{4N}$ obeys the integrability property \eqref{eqhatgint1234}.
We in particular assume that $c_{h,i}<\infty$, which is guaranteed if each $A_{h,i}$ is bounded. In addition, we define the displacement $(u_h(y,z), v_h(y,z))$ that takes $(y,z)$ to $D_h$ by
\begin{equation}
    ( u_h(y,z), v_h(y,z) ):=    T_h(y,z)-
    (y,z).
\end{equation}
Proceeding as in Example \ref{ZYe7s4}, we obtain
\begin{equation}
\begin{split}
    T_h\circ S_{\lambda_h}(\xi,\eta)
    & =
    \Big(
        \frac{\xi + \frac{\eta}{\lambda_h}}{\sqrt{2}} ,
        \frac{\xi - \frac{\eta}{\lambda_h}}{\sqrt{2}}
    \Big)
    \\ & +
    \left(
        u_h
        \Big(
            \frac{\xi + \frac{\eta}{\lambda_h}}{\sqrt{2}} ,
            \frac{\xi - \frac{\eta}{\lambda_h}}{\sqrt{2}}
        \Big) ,
        v_h
        \Big(
            \frac{\xi + \frac{\eta}{\lambda_h}}{\sqrt{2}} ,
            \frac{\xi - \frac{\eta}{\lambda_h}}{\sqrt{2}}
        \Big)
    \right) ,
\end{split}
\end{equation}
and assumption \eqref{gc0XtJ} of Theorem~\ref{thm:randload} is satisfied if
\begin{equation}\label{X4sfD3}
    \lim_{h\to\infty}
    u_h
    \Big(
        \frac{\xi + \frac{\eta}{\lambda_h}}{\sqrt{2}} ,
        \frac{\xi - \frac{\eta}{\lambda_h}}{\sqrt{2}}
    \Big)
    =
    0 ,
    \quad
    \lim_{h\to\infty}
    v_h
    \Big(
        \frac{\xi + \frac{\eta}{\lambda_h}}{\sqrt{2}} ,
        \frac{\xi - \frac{\eta}{\lambda_h}}{\sqrt{2}}
    \Big)
    =
    0 ,
\end{equation}
for any $(\xi,\eta) \in Z \times Z$. In addition, proceeding as in Example \ref{ZYe7s4}, we compute
\begin{equation}\label{V2mbpe}
    \| \eta' - \eta \|
    =
    \frac{\lambda_h}{\sqrt{2}}
    \left\|
        u_h
        \Big(
            \frac{\xi + \frac{\eta}{\lambda_h}}{\sqrt{2}} ,
            \frac{\xi - \frac{\eta}{\lambda_h}}{\sqrt{2}}
        \Big)
        -
        v_h
        \Big(
            \frac{\xi + \frac{\eta}{\lambda_h}}{\sqrt{2}} ,
            \frac{\xi - \frac{\eta}{\lambda_h}}{\sqrt{2}}
        \Big)
    \right\| ,
\end{equation}
so that \eqref{eqconveta} places restrictions on the quenching schedule
$(\beta_h)$.

In order to make these conditions more explicit, assume that the cells $A_{h,i}$ contain the corresponding points $(y_{h,i},z_{h,i})$, and denote by $\delta_h(y,z)$ the diameter of the cell $A_{h,i}$ containing $(y,z)$. Then, we have
\begin{equation}
\vertiii{
        (u_h,v_h)
        \left(
            \frac{\xi + \frac{\eta}{\lambda_h}}{\sqrt{2}} ,
            \frac{\xi - \frac{\eta}{\lambda_h}}{\sqrt{2}}
        \right)}
    \leq
    \delta_h\left(S_{\lambda_h}(\xi,\eta)\right) ,
\end{equation}
and (\ref{X4sfD3}) follows if
\begin{equation}\label{eqconvdeltah13}
    \lim_{h\to\infty}
    \delta_h\left(S_{\lambda_h}(\xi,\eta)\right) = 0
\end{equation}
pointwise in $\xi$ and $\eta$. This condition requires, in particular, that $\delta_h(\xi,\xi) \to 0$, i.~e., that the point set become infinitely dense in the limit on ${\rm diag}(Z\times Z)$, and, for given $(\beta_h)$ it places restrictions on how sparse the point-data density can be away from ${\rm diag}(Z\times Z)$, i.~e., as $y$ and $z$ become decorrelated. In addition, from (\ref{V2mbpe}) we have
\begin{equation}\label{eqdiffetaetap}
    \| \eta' - \eta \|
    \leq
    \sqrt{2} \lambda_h
    \delta_h(S_{\lambda_h}(\xi,\eta)) ,
\end{equation}
so that the condition
\begin{equation}\label{eqlambdahdeltah}
    \lim_{h\to\infty} \lambda_h
    \delta_h\left(S_{\lambda_h}(\xi,\eta)\right)
    = 0
\end{equation}
ensures that both \eqref{gc0XtJ} and \eqref{eqconveta} are satisfied. 
It remains to check (vi). We proceed as above and define $\hat g:Z\to[0,\infty]$ by
\begin{equation}
    \hat g(\xi)
    :=
    \sup_{\eta\in Z} L\left(\frac{\xi+\eta}{\sqrt 2},\frac{\xi-\eta}{\sqrt 2}\right).
\end{equation}
We assume integrability,
\begin{equation}\label{eqhatgint1234}
 \hat g\in L^1(Z\times Z).
\end{equation}
In order to ensure that $g$ obeys the bound \eqref{aOI3Wq}, we assume that \eqref{eqlambdahdeltah} holds uniformly, in the sense that
\begin{equation}\label{eqlambdahdeltahunif}
     \lim_{h\to\infty}  \lambda_h   \sup_{y,z}
    \delta_h\left(y,z\right)
    = 0 .
\end{equation}
Then there is $h_0\in\mathbb N$ such that $\lambda_h\ge1$ and $$|\lambda_h\delta_h(S_{\lambda_h}(\xi,\eta))|\le \frac{1}{\sqrt2} \qquad \forall h\ge h_0, \forall (\xi,\eta)\,. $$
By \eqref{eqdiffetaetap} this implies $\|\eta'-\eta\|\le 1$. The rest of the argument is as in 
Example \ref{ZYe7s4}: we define
\begin{equation}
    g(\xi,\eta)
    :=
    2B_{\beta_0}^{-1} {\rm e}^{-\beta_0 \|\eta\|^2+2\beta_0} \hat g(\xi),
\end{equation}
and observe that 
\eqref{eqhatgint1234} implies
$g\in L^1(Z\times Z)$. The proof of \eqref{aOI3Wq} is the same as in \eqref{eqwbeta0etaLs1}. We stress that the assumption \eqref{eqlambdahdeltahunif} requires, in particular, $\sqrt{\beta_h}$ to diverge more slowly than $1/\delta_h$.

\subsection{Deterministic loading}\label{lH1P1g}

The case of deterministic loading is amenable to further simplification. Let $Z=\mathbb R^{2N}$. For a given affine subspace $E$ of $Z$ of dimension $N$, with $E_0$ the translate of $E$ through the origin and $E_0^\perp$ the orthogonal complement of $E_0$, we introduce the mapping $S_\lambda : E \times E_0^\perp \times E_0 \to Z \times E$
\begin{equation}\label{tRn1Y8}
    (y,z)
    =
    S_\lambda(\xi,\eta,\zeta)
    :=
    \Big(
        \xi + \frac{\eta}{\lambda} ,
        \xi - \frac{\zeta}{\lambda}
    \Big) ,
\end{equation}
for shorthand. This mapping can be inverted to give
\begin{equation}\label{tRn1Y8b}
    (\xi,\eta,\zeta)
    =
    S_\lambda^{-1}(y,z)
    =
    \Big(
        P_E y ,
        \lambda \,
        (y - P_E y) ,
        \lambda \,
        (P_E y - z)
    \Big) ,
\end{equation}
with $(y,z) \in Z \times E$ and $P_E$ the orthogonal projection of $Z$ onto $E$, see Figure~\ref{fig2}.

\begin{figure}
\begin{center}
    \includegraphics[width=6cm]{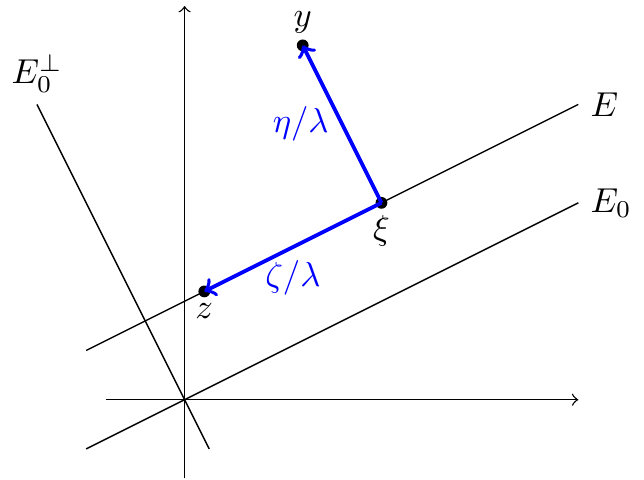}
\end{center}
    \caption{Sketch of the change of variables in \eqref{tRn1Y8}-\eqref{tRn1Y8b}.} \label{fig2}
\end{figure}

\begin{theorem}\label{tQ4UnZ}
Suppose that the assumptions of Theorem~\ref{thm:det-Leb} hold. Let $(\mu_{D,h})$ be a sequence of measures in $\mathcal{M}(Z)$,
 and $\mu_h:=\mu_{D,h}\times (\mathcal H^N\LL E)\in \mathcal M(Z\times Z)$. Assume, additionally, that:
\begin{itemize}
    \item[iv)] For every $h \in \mathbb{N}$, there is a Borel transport map $T_{D,h} : Z \to Z$ such that
\begin{equation}\label{T6gPqa}
    \mu_{D,h} = T_{D,h} \# \mu_D ,
\end{equation}
where $\mu_D = L_D \, \mathcal{L}^{2N}$. We write $ T_h(y,z) = (T_{D,h}(y),z)$.
    \item[v)] For every $(\xi,\eta,\zeta) \in E\times E_0^\perp \times E_0$,
\begin{equation}\label{gc0XtJb}
    \lim_{h\to\infty} S_{\lambda_h}^{-1}\circ T_h\circ S_{\lambda_h}(\xi,\eta,\zeta)
    =
    \left(\xi,\eta,\zeta\right) .
\end{equation}
\item[vi)] There is a sequence $(\beta_h)$ of positive numbers diverging to $+\infty$ and $g\in L^1(E\times E_0^\perp \times E_0;\mathcal H^{3N})$ such that for $h\ge h_0$
\begin{equation}\label{UpF06E}
\begin{split}
    \left[ 
        \hat w_{\beta_0}(\eta,\zeta)
        +
        \hat w_{\beta_0}(\eta_h',\zeta_h')
    \right] 
    L_D\left(\xi+\frac{\eta}{\lambda_h}\right) 
    \le
    g(\xi,\eta,\zeta) ,
\end{split}
\end{equation}
where we write
$\lambda_h:=\sqrt{\beta_h/\beta_0}$, with $\beta_0$ as in (iii), and
\begin{equation}\label{sD3vIL}
    (\xi'_h,\eta'_h,\zeta'_h) := S_{\lambda_h}^{-1} \circ T_h \circ S_{\lambda_h}(\xi,\eta,\zeta) 
\end{equation}
(depending implicitly on $\xi$, $\eta$, $\zeta$) 
 and, for $(\eta,\zeta)\in E_0^\perp\times E_0=Z$,
\begin{equation}
    \hat w_{\beta_0}(\eta,\zeta) :=     B_{\beta_0}^{-1} {\rm e}^{-\beta_0
    \|\eta\|^2-\beta_0 \|\zeta\|^2}.
\end{equation}
\end{itemize}
Then,
\begin{equation}
    \mathop{{w}{-}\lim}_{h\to\infty}
    (\mu_{h,\beta_h} - \mu_{\beta_h})
    =
    0
\end{equation}
in $\mathcal M_b(Z\times Z)$ (in the sense of \eqref{eqdefweakProb}).
\end{theorem}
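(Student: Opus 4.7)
The plan is to mirror the argument for Theorem~\ref{thm:randload}, but using the change of variables \eqref{tRn1Y8}--\eqref{tRn1Y8b} adapted to the affine subspace $E$ rather than the symmetric $(y,z)\mapsto S_\lambda(\xi,\eta)$ used there. Because $\mu_h=\mu_{D,h}\times(\mathcal H^N\LL E)$ and $T_h(y,z)=(T_{D,h}(y),z)$ by iv), thermalization gives
\begin{equation*}
\mu_{h,\beta_h}(f) = \int_{Z\times E} f(T_h(y,z))\,w_{\beta_h}(T_h(y,z))\,L_D(y)\,dy\,d\mathcal H^N(z),
\end{equation*}
while $\mu_{\beta_h}(f)$ has the same form with $T_h$ replaced by the identity (cf.~\eqref{YuF3fv}). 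I would then substitute $(y,z)=S_{\lambda_h}(\xi,\eta,\zeta)$ with $\lambda_h=\sqrt{\beta_h/\beta_0}$ to bring both integrals onto the common domain $E\times E_0^\perp\times E_0$, exactly as in \eqref{BO3pMY}--\eqref{rrHL3A}.

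The key calculation is the rescaling of the Gaussian weights. Writing $T_h\circ S_{\lambda_h}(\xi,\eta,\zeta)=S_{\lambda_h}(\xi_h',\eta_h',\zeta_h')$ with $(\xi_h',\eta_h',\zeta_h')$ as in \eqref{sD3vIL}, the orthogonal decomposition $E_0\perp E_0^\perp$ gives $\|(\xi_h'+\eta_h'/\lambda_h)-(\xi_h'-\zeta_h'/\lambda_h)\|^2=(\|\eta_h'\|^2+\|\zeta_h'\|^2)/\lambda_h^2$. Combining this with the Jacobian $\lambda_h^{-2N}$ from the substitution and the scaling \eqref{eqBbetascal} of $B_{\beta_h}$, the perturbed weight collapses to $\hat w_{\beta_0}(\eta_h',\zeta_h')$, and the unperturbed one to $\hat w_{\beta_0}(\eta,\zeta)$. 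Therefore
\begin{equation*}
\mu_{h,\beta_h}(f)-\mu_{\beta_h}(f) = \int_{E\times E_0^\perp\times E_0}\!\bigl[f(S_{\lambda_h}(\xi_h',\eta_h',\zeta_h'))\,\hat w_{\beta_0}(\eta_h',\zeta_h')-f(S_{\lambda_h}(\xi,\eta,\zeta))\,\hat w_{\beta_0}(\eta,\zeta)\bigr]L_D\bigl(\xi+\tfrac{\eta}{\lambda_h}\bigr)\,d\mathcal H^{3N},
\end{equation*}
in complete analogy with formula \eqref{eqmuhbhfdiff} in the random-loading case.

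To finish, I would apply dominated convergence. By v), $(\xi_h',\eta_h',\zeta_h')\to(\xi,\eta,\zeta)$; since $\lambda_h\to+\infty$, both $S_{\lambda_h}(\xi,\eta,\zeta)$ and $S_{\lambda_h}(\xi_h',\eta_h',\zeta_h')$ collapse to the diagonal point $(\xi,\xi)$, so by continuity of $f$ and $\hat w_{\beta_0}$ the bracket vanishes pointwise. Meanwhile $L_D(\xi+\eta/\lambda_h)\to L_D(\xi)$ by assumption ii) of Theorem~\ref{thm:det-Leb}. The bound by $\|f\|_\infty\,g(\xi,\eta,\zeta)$ with $g\in L^1$ is precisely what vi) of the present theorem provides, also ensuring $\mu_{h,\beta_h},\mu_{\beta_h}\in\mathcal M_b(Z\times Z)$ for $h\ge h_0$; the passage from $f\in C_c$ to $f\in C_b$ is then routine since the dominating bound is independent of $f$.

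The main obstacle will be the algebraic bookkeeping of the change of variables: one must verify that $S_{\lambda_h}^{-1}\circ T_h\circ S_{\lambda_h}$ is well-defined on $E\times E_0^\perp\times E_0$ (using that $T_h$ preserves the second factor $E$, so $P_E$ acts trivially on that component), and that the decomposition of $y'-z'$ along $E_0^\perp\oplus E_0$ is orthogonal, so that the quadratic form in the exponent splits and the Jacobian $\lambda_h^{-2N}$ cleanly cancels $B_{\beta_h}^{-1}=B_{\beta_0}^{-1}\lambda_h^{2N}$. Once this is verified, the remainder of the argument is a transcription of the proof of Theorem~\ref{thm:randload}.
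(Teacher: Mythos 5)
Your proposal is correct and follows essentially the same route as the paper: the same change of variables \eqref{tRn1Y8}, the same collapse of $\lambda_h^{-2N}w_{\beta_h}(S_{\lambda_h}(\cdot))$ to $\hat w_{\beta_0}$ via the orthogonal splitting of $E_0^\perp\oplus E_0$ and the scaling \eqref{eqBbetascal}, the same difference formula, and the same dominated-convergence conclusion using (ii), (v) and (vi), with the extension to $C_b$ justified by the boundedness of $\mu_{h,\beta_h}$ and $\mu_{\beta_h}$ from (vi). The bookkeeping points you flag (that $T_h$ preserves the second factor $E$ and that the quadratic form splits orthogonally) are exactly the ones the paper's computation relies on, and they check out.
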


\begin{proof} Let $f\in C_c(Z\times Z)$. By (iv), we have
\begin{equation}
\begin{split}
    &
    \mu_{h,\beta_h}(f) - \mu_{\beta_h}(f)
    = \\ &
    \int_{Z\times E}
        \Big(
            f(T_h(y,z)) \,  w_{\beta_h}(T_h(y,z))
            -
            f(y,z) \,  w_{\beta_h}(y,z)
        \Big)
        L_D(y)
    \, dy \, d\mathcal H^N(z) .
\end{split}
\end{equation}
Changing variables as in (\ref{tRn1Y8}), we obtain
\begin{equation}
\begin{split}
    &
    \mu_{h,\beta_h}(f) - \mu_{\beta_h}(f)
    = \\ &
    \int_{E \times E_0^\perp \times E_0}
        \Big(
            f(T_h\circ S_{\lambda_h}(\xi,\eta,\zeta)) \,  w_{\beta_h}(T_h\circ S_{\lambda_h}(\xi,\eta,\zeta))
            - \\ & \qquad\qquad
            f(S_{\lambda_h}(\xi,\eta,\zeta)) \,  w_{\beta_h}(S_{\lambda_h}(\xi,\eta,\zeta))
        \Big)
        L_D\left(\xi + \frac{\eta}{\lambda_h}\right)
        \, \lambda_h^{-2N} 
    \, d\mathcal H^N(\xi) 
    \, d\mathcal H^N(\eta) 
    \, d\mathcal H^N(\zeta) .
\end{split}
\end{equation}
Using that $\lambda_h=\sqrt{\beta_h/\beta_0}$ with $\beta_0$ as in iii), then
\begin{equation*}
    \lambda_h^{-2N}
    w_{\beta_h}(S_{\lambda_h}(\xi,\eta,\zeta))
    =
    B_{\beta_0}^{-1} 
    {\rm e}^{-\beta_0 \|\eta\|^2} 
    {\rm e}^{-\beta_0 \|\zeta\|^2}
    =:
    \hat w_{\beta_0}(\eta,\zeta) ,
\end{equation*}
and, therefore,
\begin{equation}\label{end-est}
\begin{split}
    &
    \mu_{h,\beta_h}(f) - \mu_{\beta_h}(f)
    = \\ &
    \int_{E \times E_0^\perp \times E_0}
        \Big(
            f(T_h\circ S_{\lambda_h}(\xi,\eta,\zeta)) \,
            \hat w_{\beta_0}(\eta_h',\zeta_h')
            - \\ & \qquad\qquad\qquad\qquad
            f(S_{\lambda_h}(\xi,\eta,\zeta)) \,  \hat w_{\beta_0}(\eta,\zeta)
        \Big)
        L_D\left(\xi + \frac{\eta}{\lambda_h}\right)
    \,  d\mathcal H^N(\xi) \, d\mathcal H^N(\eta) \,
d\mathcal H^N(\zeta) .
\end{split}
\end{equation}
As in the proof of Theorem \ref{thm:randload}, a similar computation and (vi) ensure that $\mu_{h,\beta_h}$ and $\mu_{\beta_h}$ are bounded measures for $h\ge h_0$, and so \eqref{end-est} holds for all $f\in C_b(Z\times Z)$. Further, by assumptions (ii) and (v), the integrand in the right-hand side converges pointwise to zero and the claim follows from (iii), (vi) and Lebesgue's dominated convergence theorem.
\end{proof}

\subsection{Approximation of the material likelihood by discrete empirical measures}\label{8yK9TU}

Suppose that the material likelihood measure $\mu_D \in \mathcal{M}(Z)$ is approximated by discrete empirical measures of the form
\begin{equation}\label{Pakqbt}
    \mu_{D,h}
    =
    \sum_{i=1}^{\infty}
        c_{h,i} \delta_{y_{h,i}} ,
    \quad
    c_{h,i} \geq 0,
    \quad
    y_{h,i} \in Z ,
\end{equation}
where $(y_{h,i})$ a point data sets, possibly finite, and $c_{h,i}$ is the likelihood of data point $y_{h,i}$. Consider the measure $\mu_h=\mu_{D,h}\times (\mathcal H^N\LL E)$. From (\ref{Pakqbt}), the approximate likelihood of outcomes of a univariate quantity of interest $f \in C_c(Z)$ evaluates to
\begin{equation}\label{skse4A}
    \int_{Z\times Z} f(z) \, d\mu_{h,\beta_h}(y,z)
    =
    \sum_{i=1}^{\infty}
        c_{h,i}
        \int_E
            f(z) B_{\beta_h}^{-1} {\rm e}^{-\beta_h \|y_{h,i}-z\|^2}
        \, d\mathcal H^N(z) .
\end{equation}
This expression may be simplified by recourse to the closest-point projection $P_E$ from $Z$ onto $E$. Denoting
\begin{equation}
    z_{h,i}
    :=
    P_E (y_{h,i})
    \in E ,
\end{equation}
for all points in the material data set and decomposing the vectors $(y_{h,i}-z)$ into normal and parallel components with respect to $E$, (\ref{skse4A}) reduces to
\begin{equation}\label{8agtjW}
\begin{split}
    &
    \int_{Z\times Z} f(z)  d\mu_{h,\beta_h}(y,z)
    = \\ &
    \sum_{i=1}^{\infty}
    c_{h,i} B_{\beta_h}^{-1}
    {\rm e}^{-\beta_h \|y_{h,i}-z_{h,i}\|^2}
    \Big(
        \int_E
            f(z)
            {\rm e}^{-\beta_h \|z-z_{h,i}\|^2}
        \, d\mathcal H^N(z)
    \Big) .
\end{split}
\end{equation}
Let $E_0$ be the translate of $E$ through the origin. Then,
\begin{equation}\label{HMJmfY}
    \int_E
        f(z)
        {\rm e}^{-\beta_h \|z-z_{h,i}\|^2}
    \,  d\mathcal H^N(z)
    =
    \int_{E_0}
        f(z_{h,i}+\xi)
        {\rm e}^{-\beta_h \|\xi\|^2}
    \,  d\mathcal H^N(\xi)
   =
    C_{\beta_h} f_{h,i} ,
\end{equation}
with
\begin{equation}
    f_{h,i}
    :=
    C_{\beta_h}^{-1}
    \int_{E_0}
        f(z_{h,i}+\xi)
        {\rm e}^{-\beta_h \|\xi\|^2}
    \, d\mathcal H^N(\xi) ,
    \quad
    C_{\beta_h}
    :=
    \int_{E_0}
        {\rm e}^{-\beta_h \|\xi\|^2}
    \, d\mathcal H^N(\xi) ,
\end{equation}
and (\ref{8agtjW}) reduces to
\begin{equation}\label{PEj3fn}
    \int_{Z\times Z} f(z)  \, d\mu_{h,\beta_h}(y,z)
    =
    B_{\beta_h}^{-1} C_{\beta_h}
    \sum_{i=1}^{\infty}
    c_{h,i} f_{h,i}
    {\rm e}^{-\beta_h \|y_{h,i}-z_{h,i}\|^2} ,
\end{equation}
which is explicit up to quadratures over $E_0$. In particular, with $f=1$,
(\ref{PEj3fn}) gives
\begin{equation}\label{lGa4W5}
    | \mu_{D,h,\beta_h} |
    =
    B_{\beta_h}^{-1} C_{\beta_h}
    \sum_{i=1}^{\infty}
    c_{h,i}
    {\rm e}^{-\beta_h \|y_{h,i}-z_{h,i}\|^2} .
\end{equation}
If this sum is nonzero and finite, from these identities, the approximate
expectation of outcomes for $f$ follows as
\begin{equation}\label{G6Q9ug}
    \mathbb{E}_h[f]
    =
    \frac
    {
        \sum_{i=1}^{\infty}
        c_{h,i} f_{h,i}
        {\rm e}^{-\beta_h \|y_{h,i}-z_{h,i}\|^2}
    }
    {
        \sum_{i=1}^{\infty}
        c_{h,i}
        {\rm e}^{-\beta_h \|y_{h,i}-z_{h,i}\|^2}
    } .
\end{equation}
Again, it bears emphasis that these approximate expectations are explicit in the data and involve no intermediate modeling step.

As in the case of random loading, Theorem~\ref{tQ4UnZ} sets forth sufficient conditions for the approximate expectations (\ref{G6Q9ug}) to converge to $\mathbb{E}_\infty[f]$. In order to make such convergence conditions more explicit, suppose that the transport map $T_{D,h}$ introduced in (\ref{T6gPqa}) takes values in the set $D_h = \{y_{h,i}\}_{i=1}^\infty $. Let
\begin{equation}
    A_{h,i} := T_h^{-1}(y_{h,i}) ,
    \qquad
    c_{h,i} := \mu_{D,h}(A_{h,i}) ,
\end{equation}
and assume that the sets $A_{h,i}$ are bounded Borel sets and that $(A_{h,i})_{i\in\mathbb N}$ forms a partition of $Z$, which  becomes finer with increasing $h$ in the sense of \eqref{eqlahdehb} below, and that the limiting measure is integrable in the sense of \eqref{eqhatgl1due} below.
We write
\begin{equation}
    T_{D,h}(y)
    =
    y
    +
    u_h(y) .
\end{equation}
Then, a simple calculation gives
\begin{equation}
    T_h\circ S_{\lambda_h}(\xi,\eta,\zeta)
    =
    \Big(
        \xi + \frac{\eta}{\lambda_h} ,
        \xi - \frac{\zeta}{\lambda_h}
    \Big)
    +
    \left(
        u_h
        \Big(
            \xi + \frac{\eta}{\lambda_h}
        \Big) ,
        0
    \right) ,
\end{equation}
and
\begin{equation}
    S_{\lambda_h}^{-1}\circ T_h\circ S_{\lambda_h}(\xi,\eta,\zeta)
    =
    (\xi,\eta,\zeta)
    +
    \Big(P_E u_h, \lambda_h(u_h-P_Eu_h), \lambda_h P_E u_h\Big),
\end{equation}
with $u_h$ evaluated at $\xi+\frac\eta{\lambda_h}$. Assumption (v) of Theorem~\ref{tQ4UnZ} is satisfied if
\begin{equation}\label{H88xX2}
    \lim_{h\to\infty}
 \lambda_h   u_h
    \Big(
        \xi + \frac{\eta}{\lambda_h}
    \Big)
    =
    0
\end{equation}
for all $(\xi,\eta) \in E \times E_0^\perp$. This results in restrictions on the quenching schedule $(\beta_h)$.

In order to make these conditions more explicit, assume that the cells $A_{h,i}$ contain the corresponding points $y_{h,i}$ and denote by $\delta_h(y)$ the diameter of the cell $A_{h,i}$ containing $y$. Then, we have
\begin{equation}
    \| u_h \Big( \xi + \frac{\eta}{\lambda_h} \Big) \|
    \leq
    \delta_h \Big( \xi + \frac{\eta}{\lambda_h} \Big)
\end{equation}
and so (\ref{H88xX2}) reduces to showing that
\begin{equation}\label{eqlahdeh}
    \lim_{h\to\infty} 
    \lambda_h \delta_h 
    \Big( \xi + \frac{\eta}{\lambda_h} \Big) 
    = 
    0 .
\end{equation}
This condition requires, in particular, that $\delta_h(\xi) \to 0$, i.~e., that the point set becomes infinitely dense in the limit on $E$, and, for given $(\beta_h)$ it places restrictions on how sparse the point-data density can be away from $E$.

It remains to verify assumption (vi). We proceed as in the previous examples, define $\hat g:E\to[0,\infty]$ by
\begin{equation}
    \hat g(\xi):=\sup_{\eta\in E_0^\perp} L_D(\xi+\eta)
\end{equation}
and assume 
\begin{equation}\label{eqhatgl1due}
\hat g\in L^1(E;\mathcal H^N).  
\end{equation}
We assume that \eqref{eqlahdeh} holds uniformly, in the sense that
\begin{equation}\label{eqlahdehb}
    \lim_{h\to\infty} \lambda_h \sup_{y\in Z}\delta_h ( y) = 0\,.
\end{equation}
Select $h_0$ such that $\lambda_h \sup_{y\in Z}\delta_h ( y)\le 1$ for all $h\ge h_0$, which implies 
$\|\eta'-\eta\|\le1$, 
and hence $-\|\eta'\|^2\le -\frac12\|\eta\|^2+1$. The same holds for $\zeta'$.
Therefore
\begin{equation}
    \hat w_{\beta_0} (\eta',\zeta')\le
    B_{\beta_0}^{-1} {\rm e}^{-\frac12 \beta_0 (\|\eta\|^2+\|\zeta\|^2)+2\beta_0}.
\end{equation}
We then define
\begin{equation}
    g(\xi,\eta,\zeta):=2B_{\beta_0}^{-1} {\rm e}^{-\frac12 \beta_0
    (\|\eta\|^2+\|\zeta\|^2)+2\beta_0} \hat g(\xi),
\end{equation}
and observe that integrability of $\hat g$ over $E$, which we assumed in \eqref{eqhatgl1due}, implies integrability of $g$ over $E\times E_0^\perp\times E_0$.

\section*{Acknowledgements}

This work was funded by the Deutsche Forschungsgemeinschaft (DFG, German Research Foundation) {\sl via} project 211504053 - SFB 1060; project 441211072 - SPP 2256; and project 390685813 -  GZ 2047/1 - HCM.

\bibliography{biblio}
\bibliographystyle{unsrt}

\end{document}